\renewcommand{\theequation}{\thesection.\arabic{equation}}
\numberwithin{equation}{section}
\newcommand{\cal }{\mathcal }
\newcommand{\dif}{\,\mathrm{d}}
\newcommand\supp{\mathrm{supp}}
\newtheorem{remark}{Remark}[section]
\newtheorem{lemma}{Lemma}[section]
\newtheorem{prop}{Proposition}[section]
\newtheorem*{main}{Main theorem}
\begin{document}

%%%%%%%%%%%%%%%%%%%%%%%%%%%%%%%%%%%%%%%%%%%%%%%%%%%%%%%%%%%%%%%%%%
%% %%%%                                                                            Introduction                                                                             %%%%%%

\title[Global Well-Posedness of 2D Euler-$\alpha$ Equations in Exterior Domain]
{Global Well-Posedness of 2D Euler-$\alpha$ Equation in Exterior Domain}
\author[You]{Xiaoguang You}
\address{Xiaoguang You $\newline$ School of Mathematics, Northwest University, Xi'an 710069, P. R. China}
\email{wiliam$\_$you@aliyun.com}

\author[Zang]{Aibin Zang}
\address{Aibin Zang $\newline$ School of Mathematics and Computer Science \& The Center of Applied Mathematics, Yichun university, Yichun, Jiangxi, 340000 P. R. China}
\email{zangab05@126.com}

%%%%P13 address changed (we have some new administrative rules)

%\date{}%
%\dedicatory{}%
%\commby{}%
% --------------------------------------------------------------
\begin{abstract}
After casting Euler-$\alpha$ equations into vorticity-stream function formula, we obtain some very useful estimates from the properties of the vorticity formula in exterior domain. Basing on these estimates, one can have got the global existence and uniqueness of the solutions to Euler-$\alpha$ equations in 2D exterior domain provided that the initial data is regular enough.
\end{abstract}

%\date{July 21, 2015}

\maketitle
% -----------------------------------------

\noindent {\sl Keywords\/}:  Euler-$\alpha$ equations; Exterior Domain;  Dirichlet boundary conditions; vorticity-stream function
\vskip 0.2cm

\noindent {\sl AMS Subject Classification} (2020): 35Q53; 35B30; 35G25 \\

%%%%%%%%%%%%%%%%%%%%%%%%%%%%%%%%%%%%%%%%%%%%%%%%%%%%%%%%%%%%%%
%%%%%%%%%%%%%%%%%%%%%%%%%%%%%%%%%%%%%%%%%%%%%%%%%%%%%%%%%%%%%
\renewcommand{\theequation}{\thesection.\arabic{equation}}
\setcounter{equation}{0}

%%%%%%%%%%%%%%%%%%%%%%%%%%%%%%%%%%%%%%%%%%%%%%%%%%%%%%%%%%%%%%
%%%%%%%%%%%%%%%%%%%%%%%%%%%%%%%%%%%%%%%%%%%%%%%%%%%%%%%%%%%%%%%%%%
\section{Introduction}
Let $\mathcal{O} \subset \mathbb{R}^2$ be a bounded, simply connected domain with $C^\infty$ Jordan boundary $\Gamma$. We consider Euler-$\alpha$ equations in exterior domain $\Omega=\mathbb{R}^2 \setminus \overline{\mathcal{O}}$
%, with initial data $\bm{u}_0^{\alpha} \in \bm{H}^s(\Omega) \cap V$,
given by:

\begin{numcases}{}
    \partial_t \bm{v} + \bm{u} \cdot {\nabla} \bm{v}+ \sum_{j=1}^2 v_j\nabla u_j + \nabla p = 0
    \qquad    \qquad  &  $\text{in}  \ \Omega \times (0, \infty), $ \label{euler-alpha-1}  \qquad \      \qquad \\
    \text{div} \ \bm{u}= 0 &  $\text{in} \ \Omega \times [0, \infty) $ \label{euler-alpha-2}
    %\bm{u}^{\alpha} = 0  & $\text{in} \ \Gamma \times [0, \infty] \label{euler-alpha-3}$,\\
    %\bm{u}^{\alpha}|_{t=0} = \bm{u}_0^{\alpha} & $\text{in} \ \Omega $\label{euler-alpha-4},\\
    %\bm{u}^{\alpha} (x, t) \to 0 & $\forall t \in [0, \infty), |x| \to \infty $ \label{euler-alpha-5}
\end{numcases}
with initial data $\bm{u}_0$, where %$s \geq 3$ and
$\bm{v}= \bm{u}- \alpha^2 \Delta \bm{u}$ and $p$ is the pressure. Above $\bm{u}$ is called the \textit{filtered} velocity, while $
\bm{v}$ is the \textit{unfiltered} velocity and $\nabla^ \perp \cdot \bm{v}$(noted by $q$) is the \textit{unfiltered} vorticity.

Recall that $\alpha$ is a parameter having the dimensions of a length, and when Euler-$\alpha$ \eqref{euler-alpha-1} and \eqref{euler-alpha-2} is used in Large Eddy Simulations, this parameter should be related to the smallest resolved scale, we can refer \cite{2006Mathematics}. Euler-$\alpha$ equations are viewed as a conservative generalization of incompressible Euler equations. Generally, Euler-$\alpha$ equations go back to at least three independent developments \cite{kouranbaeva2000global}.

First, Euler-$\alpha$ equations arise as a generalization into several spatial dimensions of the scalar Fobas-Fuchssteiner-Camassa-Holm (FFCH) equation
\begin{equation}\label{FFCH}
\nonumber u_t-u_{xxt}+2\kappa u_x=-3uu_x+2u_xu_{xx}+uu_{xxx},
\end{equation}
which models unidirectional surface wave in shallow water. The FFCH equation can be realized as a geodesic equation on the diffeomorphism group of the circle equipped with a metric that arises from $H^1$ metric on its Lie algebra of vector field. It is this property that can be used to obtain Euler-$\alpha$ equations, an idea introduced by Holm et.al \cite{holm1998euler,holm1998euler2}.

Second, Euler-$\alpha$ equations are the equations of motion of an inviscid non-Newtonian fluid of second grade. In this interpretation, $\alpha^2$ is a material constant expressing the elastic response of the fluid. Due to essential material frame indifference and observer objectivity on Rivlin-Ericksen law, Markovitz and Coleman \cite{markovitz1964incompressible} and Noll and Truesdell \cite{truesdell2004non} obtain Euler-$\alpha$ equations without viscosity in second grade fluid.

Third, Euler-$\alpha$ model can be seen as the continuous analog of a computational vortex blob method for two-dimensional hydrodynamics, we can refer the review by Leonard \cite{leonard1980vortex}.

There has been substantial work on the well-posedness for Euler-$\alpha$ equations. Busuioc \cite{oliver2001vortex} proved the global existence and uniqueness of the solutions for  Euler-$\alpha$ equations in $\mathbb{R}^2$ and local well-posedness in $\mathbb{R}^3$. The second author has obtained  the global existence for two-dimensional period domain and the local existence in some uniform time with respect to the parameter $\alpha$  for three-dimensional period domain in \cite{zang}. In bounded domains with Dirichlet boundary conditions, namely
\begin{equation}\label{Dirichlet}
\bm{u}=0  \qquad \text{on } \Gamma,
\end{equation}
Shkoller \cite{shkoller2000analysis} showed that Euler-$\alpha$ equations are well-posed by transferring the problem from Eulerian to the Lagrangian setting. Moreover,  the solution is global for 2D case. In Eulerian setting, well-posedness for 2D case was proved in \cite{lopes2015convergence}, by Banach fixed point theorem. In half plane, the global existence of weak solutions were estabished in \cite{busuioc2020limit} in the space of Radon measures.

It is an important issue to investigate the well-posedness of Euler-$\alpha$ flow around an obstacle. This work is inspired by the singular limits for Euler-$\alpha$ equations to Euler equations as $\alpha\to 0$ or both $\alpha$ and the radius of obstacle go to zero. The latter is analogy to the results of \cite{iftimie2009incompressible} for  the viscous incompressible flow around a small obstacle. To arrive these limits, we must firstly discuss on the well-posedness problem of Euler-$\alpha$ equations in the exterior domain.

Therefore, we focus on investigating the global existence and uniqueness of the system \eqref{euler-alpha-1}, \eqref{euler-alpha-2}  with initial velocity $\bm{u}_0$, the boundary conditions \eqref{Dirichlet} and $\bm{u}$ decays to zero at infinity, that is
\begin{equation}\label{inftydecay}
 \bm{u}\to 0~~ \mbox{as}~~ |x|\to +\infty.
 \end{equation}
To solve the exterior problem, one can meet with two main difficulties: one is from the nonlinearities with high order derivatives; other is from the unboundedness of the domain.

To overcome these difficulties, we will cast the system \eqref{euler-alpha-1}, \eqref{euler-alpha-2} into the following vorticity-stream fuction formula
\begin{numcases}{}
      \partial_t q + \bm{u} \cdot \nabla{q} = 0 &  $\text{in} \ \Omega \times [0, \infty),$ \label{vstream1}\\
      \Delta \psi  = q &  $\text{in} \ \Omega \times [0, \infty) $, \label{vstream2}\\
     {\bm{u}} - \alpha^2 \Delta {\bm{u}} + \nabla p = \nabla^\perp \psi & $\text{in} \ \Omega \times [0, \infty) $\label{vstream3}
\end{numcases}
here $\psi$ is called the stream function of the \textit{unfiltered} velocity ${\bm{v}}(\equiv {\bm{u}} - \alpha^2 \Delta {\bm{u}})$ with the divergence-free vector field $\bm{u}$. It is easy to see that the system \eqref{vstream1},\eqref{vstream2} and \eqref{vstream3} is an elliptic-hyperbolic coupled system. To end it, we must linearize the system \eqref{vstream1},\eqref{vstream2} and \eqref{vstream3}. By solving the Poisson equations \eqref{vstream2} for the stream function $\psi$ with $q$, we can obtain the uniform bound of high order derivatives of $\psi.$ Applying the Galdi's estimate \cite{galdi2011introduction} to the Stoke type equations \eqref{vstream3}, we are thus able to find the uniform estimates of high order derivatives for the velocity $\bm{u}$.

However, since the domain is unbounded, it is nontrivial to establish a prior estimates of low order derivatives of $\bm{u}$ from ($\ref{vstream1}$)--($\ref{vstream3}$). To end it, we assume temporarily that the initial \textit{unfiltered} vorticity $q_0=\nabla^\perp \cdot (\bm{u}_0-\alpha^2\Delta \bm{u}_0) $ has a compact support. Observing that $q$ satisfies  a transport equation, one shows that $q$, in finite time interval, still has a compact support. Then by the estimates of the Poisson equation, we can get the uniform bound for the stream function $\psi$. Therefore, by Banach fixed point theorem, we can find a unique triple $(\bm{u},\psi,q)$ satisfies the system \eqref{vstream1}, \eqref{vstream2} and \eqref{vstream3} provided that $q_0$ has a compact support.

Finally, using  the previous  uniform estimates of ${\bm{u}}$ and $\psi$ which are independent of the support of $q$ and the system \eqref{euler-alpha-1},\eqref{euler-alpha-2},  we can immediately obtain the uniform estimates of lower order derivatives of $\bm{u}$, which help to eliminate the assumption for compact support of $q_0$.

The article is organized as follows. In section 2, we will introduce some notations and preliminaries. In section 3, we will present some technical lemmas and their proofs. In section 4, we will state and prove main result. In last section,  we will give some comments and discussions.

%%%%%%%%%%%%%%%%%%%%%%%%%%%%%%%%%%%%%%%%%%%%%%%%%%%%%%%%%%%%%%
%%%%%%%%%%%%%%%%%%%%%%%%%%%%%%%%%%%%%%%%%%%%%%%%%%%%%%%%%%%%%
\renewcommand{\theequation}{\thesection.\arabic{equation}}
\setcounter{equation}{0}

%%%%%%%%%%%%%%%%%%%%%%%%%%%%%%%%%%%%%%%%%%%%%%%%%%%%%%%%%%%%%%
%%%%%%%%%%%%%%%%%%%%%%%%%%%%%%%%%%%%%%%%%%%%%%%%%%%%%%%%%%%%%%%%%%
%%%%%%%%%%%%%%%%%%%%%%%%%%%%%%%%%%%%%%%%%%%%%%%%%%%%%%%%%%%%%%%%%%%%%%%%%%%%%%%%%%%%%%%%%%%%%%%%%%%%%%%%%%%%%%%%%%%%%%%%%%%%%%%%%%%%%%%%%%%%%%%%%%%%%%%%%%%%%%%%%%%%%%
\section{Notations and preliminaries} \label{notation}
In this section, we introduce notations and present preliminary results. We use the notation $H^s(\Omega)$ for the usual $L^2$-based Sobolev spaces of order $s$. $C_0^\infty(\Omega)$ represents the space of smooth functions with infinitely many derivatives, compactly supported in $\Omega$, and $H_0^s(\Omega)$ the closure of $C_0^\infty$ under the $H^s$-norm. For the sake of simplicity, $\bm{H}^s(\Omega)$(respectively $\bm{H}_0^s(\Omega)$) stands for vector space $(H^s(\Omega))^2$(respectively $(H^s_0(\Omega))^2$).  Let $L > 0$ be arbitrary, we set $\Omega_{L} := \Omega \cap B(0, L)$ and $\Omega^L := \Omega \setminus \overline{B(0, L)}$, where $B(0, L)$ is the disk centered at origin with radius $L$. By the way, the unit disk centered at origin in $\mathbb{R}^2$ is denoted by $D$. Let $\mathcal{A}$  be an arbitrary set of $\mathbb{R}^2$, $\delta(\mathcal{A})$ represents the $\textit{diameter}$ of $\mathcal{A}$, that is
\begin{align}
\nonumber \delta(\mathcal{A}) := \sup_{x, y \in \mathcal{A}} |x - y|
\end{align}
We also make use of the following notations
\begin{align*}\label{2}
  %& H := \lbrace \textbf{u} \in (L^2(\Omega))^2:  \text{div} \ \textbf{u} = 0 \text{ in } \Omega,  \textbf{u} \cdot \nu = 0 \rbrace\\
  & \mathcal{D} := \lbrace \textbf{u} \in (C_0^\infty(\Omega))^2; \text{div}\, \textbf{u} = 0\rbrace,\\
  & V  := \lbrace \textbf{u} \in \bm{H}_0^1(\Omega); \ \text{div} \, \textbf{u}  =  0  \ \text{in} \ \Omega\rbrace,\\
  & L^2_{\sigma} := \lbrace \textbf{u} \in (L^2(\Omega))^2; \ \text{div}\, \textbf{u} = 0, \textbf{u} \cdot \nu|_{\Gamma} = 0 \rbrace, \\
  & X^2_{har} := \lbrace \textbf{h} \in (L^2({\Omega}))^2; \text{div}\, \textbf{h} = 0, \text{rot}\, \textbf{h} = 0, \textbf{h} \cdot \nu|_{\Gamma} = 0\rbrace,\\
& \bm{\dot{H}} := \{ \bm{u} \in (L_{loc}^2(\Omega))^2; \int_\Omega|\nabla \bm{u}|^2\dif x<\infty\},
  %& \mathcal{M} := \{\bm{u} \in C(\bar{\Omega}); \nabla\bm{u} \in \bm{H}^1(\Omega)\}
\end{align*}
 here $\nu$ is the nomral vector to $\Gamma$.

We then present  some well-known results which will be used in this paper. The following lemma, in the matter of transport equation, can be found  in \cite{diperna1989ordinary}.
\begin{lemma}\label{lemma-transport}{}
Let $p \in [1, \infty), q_0 \in L^p(\Omega), \textbf{u}\in L^\infty( [0, T]; V)$ with $T > 0$ fixed, then the transport equation:
\begin{numcases}{}
    \partial_t q + \textbf{u} \cdot \nabla{q} = 0, \label{transport-1}\\
    q|_{t=0} = q_0 \label{transport-2}
\end{numcases}
has a unique weak solution  q $\in C([0, T]; L^p(\Omega))$, and satisfying:
\begin{equation}\label{transport-3}
     \begin{split}
        \sup_{t\in[0, T]}\|q(t)\|_{L^p({\Omega})} \leqslant \|q_0\|_{L^p({\Omega})}.
     \end{split}
\end{equation}
\end{lemma}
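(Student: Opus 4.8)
The plan is to establish existence, uniqueness, and the $L^p$ bound for the linear transport equation \eqref{transport-1}--\eqref{transport-2} via the DiPerna--Lions theory of renormalized solutions, specialized to the setting here where $\bm{u} \in L^\infty([0,T];V)$ is divergence-free with zero normal (indeed zero full) trace on $\Gamma$. First I would make precise the notion of weak solution: $q \in C([0,T];L^p(\Omega))$ (with the weak-$*$ topology if $p=\infty$, but here $p<\infty$) solving \eqref{transport-1} in the sense of distributions against test functions $\varphi \in C_0^\infty(\Omega \times [0,T))$, using $\mathrm{div}\,\bm{u}=0$ to write $\bm{u}\cdot\nabla q = \mathrm{div}(q\bm{u})$. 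Because $\bm{u} \in V \subset \bm{H}_0^1(\Omega)$ we have $\bm{u} \in W^{1,2}_{loc}$ with $\mathrm{div}\,\bm{u}=0 \in L^\infty$, so $\bm{u}$ lies in the DiPerna--Lions class on every bounded subdomain; the zero boundary trace means no mass flux through $\Gamma$ and $\bm{u}$ can be extended by zero across $\Gamma$ into $\mathcal{O}$ (or one works intrinsically on $\Omega$ noting characteristics do not cross $\Gamma$), so the boundary poses no obstruction.

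The core steps are as follows. Step one: uniqueness. Given two solutions with the same data, their difference $\bar q$ solves the transport equation with zero initial data; by the DiPerna--Lions commutator lemma, the regularization $\bar q_\varepsilon = \bar q * \rho_\varepsilon$ satisfies $\partial_t \bar q_\varepsilon + \bm{u}\cdot\nabla \bar q_\varepsilon = r_\varepsilon$ with $r_\varepsilon \to 0$ in $L^1_{loc}$, hence $\bar q$ is a renormalized solution: for any $\beta \in C^1$ with $\beta(0)=0$ and bounded derivative, $\partial_t \beta(\bar q) + \mathrm{div}(\beta(\bar q)\bm{u}) = 0$ in the distributional sense (using $\mathrm{div}\,\bm{u}=0$). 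Taking $\beta(s) = |s|^p$ for $p<\infty$ (or a smooth truncation thereof, then passing to the limit) and integrating over $\Omega$, the flux term vanishes because $\bm{u}$ has zero normal trace on $\Gamma$ and $\bm{u},\bar q$ decay suitably at infinity (one localizes with a cutoff $\chi_R$ supported in $B(0,2R)$, controls the error term by $\|\beta(\bar q)\bm{u}\nabla\chi_R\|_{L^1}$ and sends $R\to\infty$ using $\bar q \in L^\infty_t L^p$ and $\bm{u}\in L^2$). This gives $\frac{d}{dt}\int_\Omega |\bar q|^p\,dx = 0$, so $\bar q \equiv 0$. Step two: existence. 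Construct solutions by mollifying $\bm{u}$ and $q_0$ (or by the method of characteristics along the regularized flow, which is well-defined globally since $\bm{u}_\varepsilon$ is Lipschitz in $x$ and the flow preserves $\Omega$), obtain approximate solutions $q_\varepsilon$, note the $L^p$ bound \eqref{transport-3} holds uniformly for them by the same renormalization computation, extract a weak-$*$ limit, and verify it solves the equation and the commutator estimate passes to the limit. Step three: the a priori estimate \eqref{transport-3} and the continuity-in-time $q\in C([0,T];L^p)$ then follow from the renormalization identity with $\beta(s)=|s|^p$ applied to $q$ itself: $\|q(t)\|_{L^p}^p = \|q_0\|_{L^p}^p$ is in fact an equality, and constancy of the $L^p$ norm plus weak continuity upgrades to strong continuity.

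The main obstacle is the unboundedness of $\Omega$: the DiPerna--Lions commutator lemma and renormalization property are local statements, whereas the $L^p$ estimate \eqref{transport-3} and the uniqueness argument require integrating the flux term $\mathrm{div}(\beta(q)\bm{u})$ over all of $\Omega$ and discarding it. I would handle this by a careful cutoff argument: multiply the renormalized equation by $\chi_R = \chi(x/R)$ with $\chi \equiv 1$ on $B(0,1)$, supported in $B(0,2)$, so that $\frac{d}{dt}\int_\Omega |q|^p \chi_R\,dx = \int_\Omega |q|^p\,\bm{u}\cdot\nabla\chi_R\,dx$; the right side is bounded by $R^{-1}\|\nabla\chi\|_\infty \int_{B(0,2R)\setminus B(0,R)} |q|^p |\bm{u}|\,dx \leq C R^{-1}\|q\|_{L^{2p}}^p \|\bm{u}\|_{L^2}$, which would need $q\in L^{2p}$; alternatively, since $\bm{u} \in \bm{H}^1_0 \subset L^r$ for all $r<\infty$ in 2D, use Hölder with $|q|^p \in L^{\infty}_t L^1 \cap$ (whatever integrability is inherited from $q_0 \in L^p$) to get a bound that vanishes as $R\to\infty$ — the cleanest route is to first prove everything for $q_0 \in L^p \cap L^\infty$ with compact support (where all integrals are finite and the flux term genuinely vanishes since characteristics stay in a bounded set), and then remove these restrictions by the density of such $q_0$ in $L^p(\Omega)$ together with the contraction estimate \eqref{transport-3} itself, which makes the solution map $q_0 \mapsto q$ uniformly continuous from $L^p$ to $C([0,T];L^p)$. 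The boundary $\Gamma$ contributes no difficulty precisely because $\bm{u}|_\Gamma = 0$ kills the boundary flux in every integration by parts.
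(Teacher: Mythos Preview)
Your proposal is correct and follows precisely the DiPerna--Lions theory that the paper invokes; the paper does not give an independent proof of this lemma but simply cites \cite{diperna1989ordinary} for it. Your sketch supplies the details---renormalization, commutator estimate, cutoff at infinity, and density in $L^p$---that the cited reference contains, adapted to the exterior domain via the zero extension of $\bm{u}\in V\subset\bm{H}^1_0(\Omega)$.
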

\begin{remark}\label{remark-transport-equation} Assume that $\delta(\supp \,q_0) < \infty$ and $\bm{u} \in L^1([0, T]; V \cap \bm{H}^2(\Omega))$, then we have
\begin{equation} \label{remark-equation-1}
  \delta(\supp \,q(t)) < \delta(\supp \,q_0) + C\int_0^t\|\bm{u}(s)\|_{\bm{H}^2(\Omega)} \dif s,
\end{equation}
for $t \in [0, T]$, where $C$ is a constant depends only on $\Omega$.

Indeed, let $\bm{X}\in C([0, T]; (C(\Omega))^2)$ be the unique weak solution of the following ordinary differential equations
\begin{numcases}{}
\nonumber \frac{\dif \bm{X}(t, \alpha)}{\dif t} = \bm{u}(\bm{X}(t, \alpha), t) & $\text{in } \Omega \times [0, T]$\\
\nonumber \bm{X}(0, \alpha) = \alpha & $ \forall \alpha \, \text{ in } \Omega$,
\end{numcases}
 since ${q}(x, t)$ satisfies the transport equation $\eqref{transport-1}$, we have
\begin{equation*}
{q}(\bm{X}(t, \alpha), t) = q_0(\alpha),
\end{equation*}
 then it is easy to see $(\ref{remark-equation-1})$ holds.
\end{remark}
\begin{remark}
Let $s \geq 1$, $q_0 \in H^{s}(\Omega)$ and $\bm{u} \in L^\infty([0, T]; \bm{H}^{s+2}(\Omega))$, then it follows that for all $t\in [0,T]$
\begin{equation} \label{high-transport-estimate}
\|q(t)\|_{H^{s}(\Omega)} \leqslant C \|q_0\|_{H^{s}(\Omega)},
\end{equation}
 where $C$ depends on $\|\bm{u}\|_{L^\infty([0, T]; \bm{H}^{s+2}(\Omega))}$.

Indeed, differentiate equation (\ref{transport-1}) $\alpha$ times, we obtain:
\begin{equation}
\nonumber \partial_t D^\alpha q + \textbf{u} \cdot \nabla{ (D^\alpha q)} + \sum_{\beta < \alpha, \beta + \gamma = \alpha} D^\gamma u \cdot \nabla (D^\beta q) = 0
\end{equation}
then multiply the above equation by $D^\alpha q$, integrate over $\Omega$ and sum up from $\alpha = 0$ to $|\alpha| = s$, we obtain a priori estimates:
\begin{equation*}
\nonumber \frac{\dif}{\dif t} \|q(t)\|_{H^{s}(\Omega)}^2 \leqslant C \|\bm{u}\|_{L^\infty([0, T];\bm{H}^{s+2}(\Omega))} \|q(t)\|_{H^{s}(\Omega)}^2
\end{equation*}
for $t \in [0, T]$, thanks to Gr\"onwall inequality, we have \eqref{high-transport-estimate} holds.
\end{remark}
As we know, a smooth irrotational vector field in simple connected domain is a gradient field of some scalar function. Although two-dimensional exterior domain is not simple connected,  the same conclusion holds true from the following lemma in suitable function space.
\begin{lemma}[\label{lemma-harmonic}see \cite{hieber2021helmholtz}] The following equation
\begin{numcases}{}
\nonumber \nabla \cdot \textbf{u} = 0 & $\text{in } \Omega $ \label{harmonic-1}\\
\nonumber \nabla^\perp \cdot  \textbf{u} = 0 & $\text{in } \Omega $ \label{harmonic-2}\\
\nonumber \textbf{u} \cdot \nu = 0 & $\text{on } \Gamma$ \label{harmonic-3}
\end{numcases}
only has zero solution in $L^2(\Omega) \,space$, in other words,  $X_{har}^2(\Omega) = \lbrace 0 \rbrace$.
\end{lemma}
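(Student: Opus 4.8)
The plan is to collapse the vector problem to a scalar one by passing to the stream function, and then to exploit the square‑integrability of its gradient on the exterior domain. Throughout write $\nabla^\perp=(-\partial_2,\partial_1)$ and let $\bm{u}\in X^2_{har}(\Omega)$. First I would construct the stream function: since $\operatorname{div}\bm{u}=0$ in $\Omega$ and $\bm{u}\cdot\nu=0$ on $\Gamma$, the flux of $\bm{u}$ across $\Gamma$ vanishes, hence (contractible loops bound flux‑free disks, and a loop encircling $\mathcal{O}$ is homologous to $\Gamma$) every period of the closed $1$‑form $u_2\,\dif x_1-u_1\,\dif x_2$ vanishes, so this form is exact. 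Therefore there is a single‑valued $\chi$ with $\bm{u}=\nabla^\perp\chi$ and $\nabla\chi=(u_2,-u_1)\in L^2(\Omega)$; at merely $L^2$ regularity this is made rigorous by mollifying $\bm{u}$, or by invoking the de Rham–type statements in the references attached to Lemma~\ref{lemma-harmonic}. Then $\operatorname{rot}\bm{u}=0$ reads $\Delta\chi=0$ in $\Omega$, and $\bm{u}\cdot\nu=0$ on $\Gamma$ reads $\partial_\tau\chi=0$ on $\Gamma$; since $\Gamma$ is a single Jordan curve, $\chi$ is constant there, and after subtracting that constant we may assume $\chi|_\Gamma=0$.

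Next I would pin down the behaviour of $\chi$ near infinity. Choosing $R_0$ with $\overline{\mathcal{O}}\subset B(0,R_0)$, on the ring domain $\{|x|>R_0\}$ the harmonic function $\chi$ expands as $\chi=a_0+b_0\log|x|+\sum_{n\ge1}|x|^{-n}(\alpha_n\cos n\theta+\beta_n\sin n\theta)+\sum_{n\ge1}|x|^{n}(\gamma_n\cos n\theta+\delta_n\sin n\theta)$. Because the angular modes stay orthogonal on each circle under $\partial_r$ and $\partial_\theta$, the Dirichlet integral over $\{|x|>R_0\}$ equals the sum of the individual mode energies; the $\log|x|$‑term and every $|x|^{n}$‑term with $n\ge1$ contribute $+\infty$, so the finiteness of $\int_\Omega|\nabla\chi|^2\dif x$ forces $b_0=0$ and kills all growing modes. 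Consequently $\chi=a_0+O(|x|^{-1})$ and $|\nabla\chi|=O(|x|^{-2})$ as $|x|\to\infty$; in particular $\chi$ is bounded on $\overline{\Omega}$.

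Finally I would run the energy identity. For $L>R_0$, integrating by parts on $\Omega_L$ (legitimate since $\chi$ is smooth away from $\Gamma$ and harmonic with zero trace on the smooth curve $\Gamma$) gives
\begin{equation*}
\int_{\Omega_L}|\nabla\chi|^2\dif x=\int_{\Gamma}\chi\,\partial_\nu\chi\,\dif S+\int_{\partial B(0,L)}\chi\,\partial_r\chi\,\dif S .
\end{equation*}
The first term vanishes because $\chi|_\Gamma=0$; the second is bounded by $\big(\sup_{\partial B(0,L)}|\chi|\big)\big(\sup_{\partial B(0,L)}|\nabla\chi|\big)\,|\partial B(0,L)|=O(1)\cdot O(L^{-2})\cdot O(L)=O(L^{-1})$, hence tends to $0$. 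Letting $L\to\infty$ yields $\int_\Omega|\nabla\chi|^2\dif x=0$, so $\nabla\chi\equiv0$ and $\bm{u}=\nabla^\perp\chi=0$, which proves $X^2_{har}(\Omega)=\{0\}$.

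The main obstacle is conceptual rather than computational: $\Omega$ is \emph{not} simply connected, so — exactly as for a bounded planar domain with one hole, where $X^2_{har}$ is one‑dimensional — one anticipates a nontrivial harmonic field, namely the ``circulation around $\mathcal{O}$'' field whose stream function is $\sim\log|x|$ and whose magnitude is $\sim|x|^{-1}$ near infinity. The heart of the lemma is precisely that this mode is not square‑integrable at infinity; it is the step auditing the energy of the Fourier modes that makes this precise. An alternative, more computational route is to note that $w:=u_1-\mathrm{i}u_2$ is holomorphic on $\Omega$, transport the problem by a Riemann map $\Omega\to\{|z|>1\}$ (which preserves both $\|\bm{u}\|_{L^2}$ and the tangency condition), and check from the Laurent expansion that the only $L^2$ holomorphic field on $\{|z|>1\}$ tangent to $\{|z|=1\}$ is zero.
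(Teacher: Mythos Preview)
Your argument is correct. The paper itself does not supply a proof of this lemma; it simply records the statement with a citation to \cite{hieber2021helmholtz} and moves on to the remark that follows. So there is no ``paper's own proof'' to compare against here.

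What you have written is a clean, self-contained justification of the result: reduce to a single-valued stream function $\chi$ (using that the flux of $\bm{u}$ through $\Gamma$ vanishes, so the relevant closed $1$-form has zero period around the hole), observe that $\chi$ is harmonic with constant trace on $\Gamma$, and then use the Laurent/Fourier expansion at infinity together with $\nabla\chi\in L^2(\Omega)$ to kill the logarithmic and growing modes before closing with the energy identity on $\Omega_L$ and sending $L\to\infty$. The conceptual point you isolate --- that the would-be nontrivial harmonic field coming from circulation around the obstacle has stream function $\sim\log|x|$ and magnitude $\sim|x|^{-1}$, hence just fails to be in $L^2$ at infinity --- is exactly the mechanism behind the lemma, and it is worth stating explicitly as you do. Your alternative complex-analytic sketch (transport to $\{|z|>1\}$ by a Riemann map and read off the Laurent coefficients) is also valid and is close in spirit to how the paper handles other estimates via the conformal map $T$.

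One small remark: you can streamline the regularity discussion at the start. Since $\nabla\cdot\bm{u}=0$ and $\nabla^\perp\cdot\bm{u}=0$ in the sense of distributions, each component of $\bm{u}$ is harmonic, hence $\bm{u}$ is smooth in $\Omega$; the mollification step is then unnecessary for the interior analysis, and only the interpretation of the boundary condition in trace sense needs care.
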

\begin{remark}{\label{remark-harmonic}}
 Suppose that $\bm{u} \in (L^2(\Omega))^2$ is irrotational, then there exists a scalar function $p \in L^2_{loc}(\Omega)$ such that $\bm{u} = \nabla p$. Indeed, from the classical Helmholtz decomposition, we have
\begin{align}
\bm{u} = \bm{v} + \nabla p
\end{align}
for some $\bm{v} \in L^2_{\sigma}$ and $p \in \bm{\dot{H}}$. Recalling that $\bm{u}$ is irrotational, it follows $\nabla^\perp \cdot \bm{v} = 0$, Lemma $\ref{lemma-harmonic}$ then implies $\bm{v} \equiv 0$.
\end{remark}

Observing that the equation \eqref{vstream3} is the Stokes equation. In the following lemma, we state some well-known results about Stokes equations in exterior domain.
\begin{lemma}[{\label{lemma-stokes}}see \cite{borchers1993boundedness, galdi2011introduction}]
Let $\varphi  \in L^2_{\sigma}(\Omega) \cap \bm{H}^1(\Omega), \lambda > 0$, the following Stokes equation
\begin{numcases}{}
            \bm{u}(x) - \lambda \Delta \bm{u}(x) + \nabla p = \varphi(x) &  $\text{in } \Omega \label{stokes-1} $\\
             \bm{u} = 0 & $\text{on } \Gamma$ \label{stokes-2}
\end{numcases}
has a unique solution $\displaystyle \bm{u} \in \bm{H}^3(\Omega) \cap V$ with the estimate
\begin{equation}\label{stokes-3}
     \begin{split}
      \|\bm{u}\|_{\bm{H}^3(\Omega)} \leqslant C \|\varphi\|_{\bm{H}^1(\Omega)},
     \end{split}
\end{equation}
 where $C$ is a constant depends only on $\Omega$ and $\lambda$.
\end{lemma}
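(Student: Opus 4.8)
The plan is to regard \eqref{stokes-1}--\eqref{stokes-2} as a resolvent problem for the Stokes operator and to proceed in three stages: a weak solution by a variational argument, recovery of the pressure via de Rham's theorem, and an elliptic bootstrap up to $\bm{H}^3$. For the first stage I would pose the problem variationally on $V$: seek $\bm{u}\in V$ such that
\begin{equation*}
(\bm{u},\bm{w})_{L^2(\Omega)} + \lambda(\nabla\bm{u},\nabla\bm{w})_{L^2(\Omega)} = (\varphi,\bm{w})_{L^2(\Omega)}\qquad\text{for all }\bm{w}\in V.
\end{equation*}
The bilinear form on the left is bounded on $V\times V$ and, thanks to the zeroth-order term $(\bm{u},\bm{u})_{L^2}$, coercive: it dominates $\min\{1,\lambda\}\,\|\bm{w}\|_{\bm{H}^1(\Omega)}^2$. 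This is precisely where the unboundedness of $\Omega$ causes no trouble --- no Poincar\'e inequality is needed, and the two-dimensional Stokes paradox is sidestepped. Since $\varphi\in L^2(\Omega)$, the right-hand side is a bounded functional on $V$, so Lax--Milgram produces a unique $\bm{u}\in V$ with $\|\bm{u}\|_{\bm{H}^1(\Omega)}\le C(\lambda)\|\varphi\|_{L^2(\Omega)}$; uniqueness in the class $\bm{H}^3(\Omega)\cap V$ is then immediate.

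Testing this identity against $\bm{w}\in\mathcal{D}$ shows that the distribution $\varphi-\bm{u}+\lambda\Delta\bm{u}$ annihilates every divergence-free test field, so by de Rham's theorem there is $p\in L^2_{loc}(\Omega)$ with $\nabla p=\varphi-\bm{u}+\lambda\Delta\bm{u}$, which is \eqref{stokes-1}; taking the divergence and using $\text{div}\,\varphi=0=\text{div}\,\bm{u}$ gives $\Delta p=0$, a useful fact for the far field. For the regularity, on bounded pieces of $\Omega$, including a neighbourhood of $\Gamma$ flattened using the $C^\infty$ regularity of $\Gamma$, this is the classical $L^2$ Stokes regularity theory (Cattabriga / Agmon--Douglis--Nirenberg for the Stokes system): from $\varphi\in\bm{H}^1$ one bootstraps $\bm{u}\in\bm{H}^2_{loc}$, $p\in H^1_{loc}$, and then $\bm{u}\in\bm{H}^3_{loc}$, $p\in H^2_{loc}$. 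For the behaviour at infinity one localizes with a cutoff $\chi$ vanishing near $\overline{\mathcal{O}}$ and uses that $(I-\lambda\Delta)^{-1}$ maps $H^{k}(\mathbb{R}^2)$ boundedly into $H^{k+2}(\mathbb{R}^2)$, feeding in the $\bm{H}^1$ bound and the harmonicity of $p$ to close a far-field estimate; adding the near- and far-field contributions yields $\|\bm{u}\|_{\bm{H}^3(\Omega)}\le C\|\varphi\|_{\bm{H}^1(\Omega)}$. The most economical alternative is to quote directly the $L^2$ Stokes resolvent estimates in exterior domains from \cite{borchers1993boundedness, galdi2011introduction}, which give precisely \eqref{stokes-3}.

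I expect the main obstacle to be this globalization of the regularity estimate: the interior, boundary, and exterior estimates are individually routine, but combining them into a single $\bm{H}^3(\Omega)$ bound with a constant depending only on $\Omega$ and $\lambda$ forces one to track the cutoff commutators and, above all, the pressure, which is only defined up to a constant and a priori controlled only locally. One must therefore work with $\nabla p$ rather than $p$, exploit $\Delta p=0$, and absorb the commutator terms into the lower-order norms already bounded; handling the pressure cleanly is the delicate point, and is where invoking the exterior-domain Stokes theory of \cite{galdi2011introduction} is the natural shortcut.
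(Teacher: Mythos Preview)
The paper does not prove this lemma at all: it is stated with the parenthetical ``see \cite{borchers1993boundedness, galdi2011introduction}'' and simply quoted as a known result from the exterior-domain Stokes theory. Your final sentence --- that the most economical route is to invoke the Stokes resolvent estimates from those references directly --- is exactly what the paper does, and nothing more.

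Your three-stage sketch (Lax--Milgram on $V$, de Rham for the pressure, local Cattabriga/ADN regularity plus a cutoff argument at infinity) is a sound outline of how such results are established, and your identification of the pressure-globalization step as the delicate point is accurate. But none of this appears in the paper; it is background for the cited works, not a comparison with the paper's own argument.
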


\section{Technical lemmas and their proofs} \label{preliminaries}

In this section, we will state and prove technical lemmas that are helpful to investigate the existence of Euler-$\alpha$ equations in exterior domain.

Firstly, we will discuss on the Poisson equation \eqref{vstream2}. As we know, there are many classical results to Poisson equation in a bounded domain with the Dirichlet boundary condition:
\begin{numcases}{}
\Delta \psi = q & $ \text{in } \,\Omega$ \label{poisson-1} \\
\psi = 0 & $ \text{on } \, \Gamma$. \label{poisson-2}
\end{numcases}
For instance,  in \cite{gilbarg2015elliptic}, Gilbarg and Trudinger established the existence and uniqueness of solutions to the problem above. Furthermore, they showed that
\begin{align}\label{bdd-poisson-estimate}
 \|\psi\|_{\bm{H}^2(\Omega)} \leqslant C \|q\|_{L^2(\Omega)}.
\end{align}
For unbounded domain, there are several results in homogeneous Sobolev spaces(see \cite{samrowski2004poisson}). However, we could not obtain \eqref{bdd-poisson-estimate} in general.

Since $\mathbb{R}^2 \setminus \overline{\Omega}$ is a bounded, open and simple connected domain with $C^\infty$ Jordan boundary, in \cite{iftimie2003two} it has been shown that there exists a smooth biholomorphism $T : \Omega \mapsto \mathbb{R}^2 \setminus \overline{D}$, extending smoothly up to the boundary, mapping $\Gamma$ to $\partial D$. %Furthermore, there also exists a nonzero real number $\beta$ and a bounded holomorphic function $h:\Omega\rightarrow \mathbb{C}$ such that
%\begin{equation}
%\nonumber T(z) = \beta z + h(z)
%\end{equation}
Additionally, one follows that in \cite{iftimie2003two}
\begin{equation}
\begin{aligned}\label{estimate-D_xT}
\|DT\|_{L^\infty} \leqslant C \ &\text{and} \ \|D_xT^{-1}\|_{L^\infty} \leqslant C,\\
|D_x^2T(x)| &=  {O}\left(\frac{1}{|x|^3}\right), \ as \ |x| \rightarrow \infty.
\end{aligned}
\end{equation}
Morever, the authors of \cite{iftimie2003two} have given an explicit formula for the Green's function $G_{\Omega}$ of the Laplacian in $\Omega$, namely
\begin{align}\label{greenfuction}
G_{\Omega}(x, y) = \frac{1}{2\pi} \ln \frac{|T(x) - T(y)|}{|T(x) - T(y)^*|  |T(y)|},
\end{align}
where $\eta^* = \frac{\eta}{|\eta|^2}$ for $\eta \in D^c$. We can now establish the following lemma.
\begin{lemma}\label{lemma-poisson}{}
Let $R > 0$ be fixed. Suppose $q \in L^{2}(\Omega)$ and $\mathrm{supp}\, q \subset B(0, R)$, then the Poisson equations $(\ref{poisson-1})$-$(\ref{poisson-2})$
%\begin{equation}\label{poisson-equation-1}
has a unique solution $\psi \in \bm{\dot{H}}$, which can be written explicitly as
\begin{equation}\label{poisson-4}
\psi(x) = \int_{\Omega}  G_{\Omega}(x, y) q(y) dy
\end{equation}
where $\eta^* = \frac{\eta}{|\eta|^2}   \,in\, D^c$, and $\psi$ obeys
\begin{align}
&\| \nabla \psi \|_{L^2(\Omega)} \leqslant CR(\|q\|_{L^2(\Omega)} + \|q\|_{L^1(\Omega)})\label{poisson-3-1}, \\
&\| D^2 \psi \|_{L^2(\Omega)} \leqslant C(R\|q\|_{L^2(\Omega)} + \|q\|_{L^1(\Omega)}) \label{poisson-3-2},
\end{align}
where $C$ is a constant only depends on $\Omega$. Moreover, if $q \in H^s(\Omega)$ for $s \in \mathbb{N}$, it follows
\begin{equation}\label{esimate-du-m}
\begin{aligned}
\|D^{s+2}\psi\|_{L^2(\Omega)} \leqslant C ( \|q\|_{H^{s}(\Omega)} + \|\nabla \psi\|_{L^2(\Omega)}),
\end{aligned}
\end{equation}
where $C$ also only depends on $\Omega$.
\end{lemma}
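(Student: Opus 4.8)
The plan is to establish the four claims of the lemma in order: existence and uniqueness of $\psi\in\bm{\dot H}$, the explicit representation \eqref{poisson-4}, the gradient and Hessian bounds \eqref{poisson-3-1}--\eqref{poisson-3-2}, and finally the higher-order estimate \eqref{esimate-du-m}. For existence and uniqueness in the homogeneous space $\bm{\dot H}$, I would invoke the standard variational theory for the Poisson problem in homogeneous Sobolev spaces in exterior domains (e.g. the results of \cite{samrowski2004poisson} or the Dirichlet-form argument: minimize $\tfrac12\int_\Omega|\nabla\psi|^2 - \int_\Omega q\psi$, noting that $q$ compactly supported lies in the dual of $\bm{\dot H}$ by the exterior-domain Sobolev/Hardy inequality, and uniqueness follows because a harmonic function with $\nabla\psi\in L^2$ and zero boundary trace must be constant, hence zero in $\bm{\dot H}$). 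Then I would verify that the function defined by \eqref{poisson-4} with the Green's function \eqref{greenfuction} actually solves \eqref{poisson-1}--\eqref{poisson-2} and has finite Dirichlet integral; since $G_\Omega$ is the genuine Green's function built in \cite{iftimie2003two}, $\Delta\psi=q$ and $\psi|_\Gamma=0$ are immediate, and the uniqueness just proved forces this to be the solution in $\bm{\dot H}$.

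The heart of the matter is the pair of quantitative bounds \eqref{poisson-3-1} and \eqref{poisson-3-2}. The natural approach is to pull the problem back to the exterior of the unit disk via the biholomorphism $T$: writing $\tilde q = q\circ T^{-1}$ (times the conformal Jacobian, which for a biholomorphism is $|DT^{-1}|^2$ and harmonic-invariant up to that factor), the equation $\Delta\psi=q$ becomes, because the Laplacian is conformally covariant in 2D, an equation $\Delta\tilde\psi = \tilde q$ on $\mathbb{R}^2\setminus\overline D$ with $\tilde\psi=\psi\circ T^{-1}$; the bounds \eqref{estimate-D_xT} on $DT$, $DT^{-1}$, $D^2T$ then let me transfer norms back and forth. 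On the exterior of the disk one has the fully explicit Green's function $\frac{1}{2\pi}\ln\frac{|x-y|}{|x-y^*||y|}$, and the compact support of $q$ in $B(0,R)$ (hence of $\tilde q$ in a ball of radius $\sim R$, using $\|DT\|_\infty\le C$) is what produces the factor $R$: for $\nabla\psi$ one estimates $\nabla_x G_\Omega(x,y)$, splitting into the near region $|x|\lesssim 2R$ where one uses the $L^2$ Calderón--Zygmund / Riesz-potential bound ($\nabla$ of the logarithmic potential is the $1/|x|$ kernel, giving $\|\nabla\psi\|_{L^2}\lesssim R\|q\|_{L^2}$ on that region by Young's inequality for the truncated kernel over a ball of radius $R$) and the far region $|x|\gtrsim 2R$ where $|\nabla_x G_\Omega(x,y)|\lesssim R/|x|^2$ uniformly for $y\in\supp q$, so that $\int_{|x|\ge 2R}|\nabla\psi|^2 \lesssim \|q\|_{L^1}^2\int_{2R}^\infty R^2 r^{-4}\,r\,dr \lesssim R\|q\|_{L^1}^2$, which after taking square roots and combining gives \eqref{poisson-3-1}. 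For $D^2\psi$ the near part is the Calderón--Zygmund estimate for the double Riesz transform, $\|D^2\psi\|_{L^2(B_{2R})}\lesssim\|q\|_{L^2}$ (no $R$), while the far part uses $|D^2_x G_\Omega(x,y)|\lesssim 1/|x|^2$ together with the $O(|x|^{-3})$ decay of $D^2_xT$ to get $\int_{|x|\ge 2R}|D^2\psi|^2\lesssim\|q\|_{L^1}^2$; combining yields \eqref{poisson-3-2}. I expect this far-field/near-field splitting, and in particular bookkeeping the exact power of $R$ coming through the kernel and the change of variables $T$, to be the main obstacle — everything else is standard potential theory.

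Finally, for \eqref{esimate-du-m} with $q\in H^s(\Omega)$, the plan is an interior-plus-boundary elliptic regularity bootstrap. On bounded subsets, say $\Omega_L$ for $L$ slightly larger than $R$, the classical Agmon--Douglis--Nirenberg estimates for $\Delta\psi=q$ with $\psi|_\Gamma=0$ give $\|\psi\|_{H^{s+2}(\Omega_L)}\le C(\|q\|_{H^s(\Omega_L)}+\|\psi\|_{H^1(\Omega_L)})$, and $\|\psi\|_{H^1(\Omega_L)}$ is controlled by $\|\nabla\psi\|_{L^2(\Omega)}$ together with a Poincaré-type inequality using the zero boundary condition (or by absorbing it into the stated right-hand side). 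Away from the support, on $\Omega^{R}$, $\psi$ is harmonic, so all its higher derivatives are controlled by lower ones via interior estimates for harmonic functions, and in fact decay; patching the two regions with a cutoff and commuting $\Delta$ past it (the commutator involves only $\le s+1$ derivatives of $\psi$ supported in an annulus, hence is absorbed) gives the global bound $\|D^{s+2}\psi\|_{L^2(\Omega)}\le C(\|q\|_{H^s(\Omega)}+\|\nabla\psi\|_{L^2(\Omega)})$. The only mild subtlety here is making the cutoff argument genuinely global in the exterior domain, which is harmless because in the exterior region the function is harmonic and the annular commutator terms are lower order.
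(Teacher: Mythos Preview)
Your plan is essentially the paper's: explicit Green's function \eqref{greenfuction}, near/far splitting keyed to a radius comparable to $R$, Young's inequality for $\nabla\psi$ in the near zone, Calder\'on--Zygmund for $D^2\psi$, and kernel decay in the far zone. Two refinements are worth noting. First, the Green's function carries a reflected term $\ln|T(x)-T(y)^*|$; after the change of variables this is \emph{not} a convolution, and the paper treats it separately (the $I_2$ and $J_3$ blocks) via the substitution $\xi=(T(y))^*$ and a further split on $|\xi|\lessgtr\tfrac12$. Several of the $\|q\|_{L^1}$ contributions and, through the chain-rule term $D^2T$, the $R\|q\|_{L^2}$ piece of \eqref{poisson-3-2} come from there, so your ``no $R$'' claim for the near-field Hessian is slightly too optimistic. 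For uniqueness, the paper simply applies Lemma~\ref{lemma-harmonic} to $\nabla^\perp(\psi_1-\psi_2)$, which is shorter than the variational route you sketch and avoids having to identify the dual of $\bm{\dot H}$.

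There is one genuine gap in your treatment of \eqref{esimate-du-m}. You take $L$ ``slightly larger than $R$'' and invoke harmonicity on $\Omega^R$; but then the elliptic constants on $\Omega_L$ depend on $L\sim R$, whereas the lemma asserts $C=C(\Omega)$ only. This matters downstream: in the proof of the main theorem the paper uses \eqref{esimate-du-m} precisely because its constant is independent of the (time-growing) support radius. The paper's fix is to choose $L$ once and for all with $\mathbb{R}^2\setminus\overline\Omega\subset\subset B(0,L/2)$, never use harmonicity outside $\supp q$, and run the cutoff argument at that fixed scale: $\phi\psi$ solves $\Delta(\phi\psi)=\phi q+2\nabla\phi\cdot\nabla\psi+(\Delta\phi)\psi$ on all of $\mathbb{R}^2$, giving the exterior estimate on $\Omega^L$; the bounded piece on $\Omega_L$ then uses standard elliptic regularity with trace data on $\partial B(0,L)$ controlled from the annulus $\Pi_{L,2L}$ by the exterior estimate, and one closes by induction on $s$. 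Adjust your cutoff scale accordingly.
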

\begin{remark}
Since $q$ is compactly supported on $\Omega$, it is clear that the $L^1(\Omega)$-norm of $q$ is bounded by its $L^2(\Omega)$-norm. Here, the $L^1(\Omega)$-norms of $q$ are added, for showing that the coefficients of the inequalities $(\ref{poisson-3-1})$ and $(\ref{poisson-3-2})$   linearly depend on $R$.
\end{remark}
%\begin{remark} Since $T$ is conformal, we know thatic
%\begin{align}
%H_{\Omega}(x) := \ln |T(x)|
%\end{align}
%is harmonic and vanishes on $\Gamma$. Consequently, $\psi + \beta T$ is the solution of Poisson equations $(\ref{poisson-1})$ and $(\ref{poisson-2})$ for arbitrary $\beta \in \mathbb{R}$

%\end{remark}

\begin{proof}[Proof of Lemma \ref{lemma-poisson}]
Let us consider uniqueness first. Suppose $\psi_1, \psi_2 \in \dot{H}$ are both solutions to equations (\ref{poisson-1}) and (\ref{poisson-2}). Set $\bm{w} = \nabla^\perp(\psi_1 - \psi_2)$, it follows
\begin{align}
\nonumber \nabla\cdot \bm{w} = 0 \text{ in } \Omega, \\
\nonumber \nabla^\perp\cdot \bm{w} = 0 \text{ in } \Omega, \\
\nonumber \bm{w} \cdot \nu = 0 \text{ on } \Gamma,
\end{align}{}
where $\nu$ is the unit normal vector to $\Gamma$. From Lemma \ref{lemma-harmonic}, we know that $\bm{w}=0$. Observing that $\psi_1 \equiv \psi_2 \equiv 0$ on $\Gamma$, we conclude that $\psi_1 \equiv \psi_2$ in $\Omega$.

From \cite{iftimie2003two}, one follows that the expression ($\ref{poisson-4}$) is a solution of  equations ($\ref{poisson-1})$ and ($\ref{poisson-2}$). We now check that $\psi\in \bm{\dot{H}}$ with the inequalities ($\ref{poisson-3-1}$), \eqref{poisson-3-2}. For convenience,  zero extension of $q$ on $\mathbb{R}^2\setminus \Omega$ is still denoted by $q$. We will use frequently the following general relation, for arbitrary vectors $a, b \in \mathbb{R}^2$
\begin{align}\label{formula-1}
\left|\frac{a}{|a|^2} - \frac{b}{|b|^2}\right| = \frac{|a - b|} {|a| \ |b|}.
\end{align}
% indeed, squaring the above equality, we have
%\begin{align}
%\nonumber\left|\frac{a}{|a|^2} - \frac{b}{|b|^2}\right|^2 &= \frac{1}{|a|^2} + \frac{1}{|b|^2} - \frac{2a \cdot b}{|a|^2|b|^2}=\frac{|a|^2 + |b|^2 - 2a \cdot b}{|a|^2|b|^2}=\frac{|a - b|^2}{|a|^2|b|^2}
%\end{align}
 %We start to the estimate of $\| \nabla \psi \|_{L^2}$.
 Taking $\bar{R} := \max\{1, \|D_x T\|_{L^\infty}, \|D_x T\|^{-1}_{L^\infty}\} R$, and then noting that $\Omega \equiv \{x\in \mathbb{R}^2 \big| |T(x)| > 1\}$, by ($\ref{formula-1}$), it follows:
\begin{align}
\nonumber \| \nabla \psi\|_{L^2(\Omega)}^2 &= \frac{1}{4\pi^2}\int_{\Omega}\left |\nabla \int_{\Omega} \ln \frac{|T(x) - T(y)|}{|T(x) - (T(y))^*| \ |T(y)|} q(y) \dif y \right|^2 \dif x &\\
\nonumber &\leqslant\frac{1}{4\pi^2}\int_{1 \leqslant |T(x)| \leqslant 2\bar{R}} \left |\int_{\Omega}\frac{D_x(|T(x) - T(y)|)}{|T(x) - T(y)|} q(y) \dif y \right|^2 \dif x &\\
\nonumber & \quad +\frac{1}{4\pi^2}\int_{1 \leqslant |T(x)| \leqslant 2\bar{R}} \left |\int_{\Omega}\frac{D_x(|T(x) - (T(y))^*|)}{|T(x) - (T(y))^*|} q(y) \dif y \right|^2 \dif x & \\
\nonumber & \quad +\frac{1}{4\pi^2}\int_{|T(x)|>2\bar{R}} \left|\int_{\Omega}\frac{|D_xT(x)|\ |T(y) - (T(y))^*|}{|T(x)-T(y)|\ |T(x)-(T(y))^*|} \cdot q(y) \dif y \right|^2 \dif x &\\
 &=: I_1 + I_2 + I_3. & \label{pf-poission-3-1}
\end{align}
 To estimate the term $I_1$, we introduce a truncation function $\chi$, which equals 0 in $D$, while equals 1 in $D^c$. By ($\ref{estimate-D_xT}$), and recalling $q$ is supported on $B(0, R)$, we deduce:
\begin{align}
\nonumber I_1 &=\frac{1}{4\pi^2}\int_{|T(x)| \leqslant 2\bar{R}} \left |\int_{\Omega}\frac{D_x(|T(x) - T(y)|)}{|T(x) - T(y)|} q(y) \dif y \right|^2 \dif x  \\
\nonumber &\leqslant C\int_{|T(x)| \leqslant 2\bar{R}} \left |\int_{\mathbb{R}^2}\frac{|q(y)|}{|T(x) - T(y)|} \dif y \right|^6 \dif x\\
\nonumber &\leqslant C\int_{\mathbb{R}^2} \left |\int_{|T(x) - T(y)| < 3 \bar{R}} |T(x) - T(y)|^{-1}\, |q(y)| \dif y \right|^2 \dif x\\
\nonumber&\leqslant C\int_{\mathbb{R}^2} \left |\int_{\mathbb{R}^2} |T(x)-T(y)|^{-1} \chi\left(\frac{|T(x) - T(y)|}{3\bar{R}}\right) \,|q(y)| \dif y \right|^2 \dif x,
\end{align}
applying the transformation of variables
\begin{align*}
\eta = T(x) \textit{ and } \xi = T(y)
\end{align*}
and  Young's convolution inequality, it follows:
\begin{align}
\nonumber I_1 &\leqslant C\int_{\mathbb{R}^2} \left |\int_{\mathbb{R}^2} |\eta-\xi|^{-1} \chi\left(\frac{|\eta -\xi|}{CR}\right) \,|q(T^{-1}(\xi)| \dif \xi \right|^2 \dif \eta\\
 &\leqslant CR^2 \|q\|_{L^2(\Omega)}^2. \label{I_1}
\end{align}
For the second term $I_2$, observing that $\Omega \equiv \{x\in \mathbb{R}^2 \big| |T(x)| > 1\} \equiv \{x\in \mathbb{R}^2 \big| |(T(x))^*| < 1\}$, we find
\begin{align}
\nonumber I_2 &=\frac{1}{4\pi^2}\int_{1 \leqslant |T(x)| \leqslant 2\bar{R}} \left |\int_{|(T(y))| > 1}\left(\frac{D_x(|T(x) - (T(y))^*|)}{|T(x) - (T(y))^*|} \right) q(y) \dif y \right|^2 \dif x \\
\nonumber &\leqslant \frac{\|DT\|_{L^\infty}}{4\pi^2}\int_{1 \leqslant |T(x)| \leqslant 2\bar{R}} \left |\int_{|(T(y))^*| < 1}\frac{|q(y)|)}{|T(x) - (T(y))^*|}  \dif y \right|^2 \dif x\\
\nonumber &\leqslant \frac{\|DT\|_{L^\infty}}{4\pi^2}\int_{1 \leqslant |T(x)| \leqslant 2\bar{R}} \left |\int_{|(T(y))^*| < 1 \cap |T(x) - (T(y))^*| \leqslant \frac{1}{2}}\frac{|q(y)|)}{|T(x) - (T(y))^*|}  \dif y \right|^2 \dif x\\
\nonumber &\quad + \frac{\|DT\|_{L^\infty}}{4\pi^2}\int_{1 \leqslant |T(x)| \leqslant 2\bar{R}} \left |\int_{|(T(y))^*| < 1 \cap |T(x) - (T(y))^*| \geq \frac{1}{2}}\frac{|q(y)|)}{|T(x) - (T(y))^*|}  \dif y \right|^2 \dif x\\
&=: I_{21}+ I_{22}.
\end{align}
Let us make change of the variables again
\begin{align*}
\eta = T(x) \textit{ and } \xi = (T(y))^*,
\end{align*}
and note that $\dif y = |DT^{-1}((\xi)^*)| \frac{\dif \xi}{|\xi|^4}$, we then get
\begin{align}
\nonumber I_{21} &\leqslant C\int_{1 \leqslant |\eta| \leqslant 2\bar{R}} \left |\int_{|\xi| < 1 \cap |\eta-\xi| \leqslant \frac{1}{2}}\frac{|q(T^{-1}(\xi^*))|)}{|\eta - \xi|}  \frac{\dif \xi}{|\xi|^4} \right|^2 \dif \eta\\
&\leqslant C\int_{1 \leqslant |\eta| \leqslant \frac{3}{2}} \left |\int_{\frac{1}{2} < |\xi| < 1 \cap |\eta-\xi| \leqslant \nonumber \frac{1}{2}}\frac{|q(T^{-1}(\xi^*))|)}{|\eta - \xi|}  \frac{\dif \xi}{|\xi|^4} \right|^2 \dif \eta,
\end{align}
 and then by Young's convolution inequality again, the above inequality implies
\begin{align*}
I_{21} &\leqslant C\|q\|_{L^2(\Omega)}^2.
\end{align*}
The bound of the term $I_{22}$ is easily obtained, indeed,
\begin{align}
\nonumber I_{22} &=\frac{\|DT\|_{L^\infty}}{4\pi^2}\int_{1 \leqslant |T(x)| \leqslant 2\bar{R}} \left |\int_{|(T(y))^*| < 1 \cap |T(x) - (T(y))^*| > \frac{1}{2}}\frac{|q(y)|}{|T(x) - (T(y))^*|}  \dif y \right|^2 \dif x\\
\nonumber &\leqslant C\int_{|T(x)| \leqslant 2\bar{R}} \left |\int_\Omega |q(y)|  \dif y \right|^2 \dif x\\
\nonumber &\leqslant CR^2\|q\|_{L^1(\Omega)}^2.
\end{align}
From the estimates of $I_{21}$ and $I_{22}$, it follows immediately
\begin{align}
\nonumber I_2 &\leqslant CR^2(\|q\|_{L^2(\Omega)}^2 + \|q\|_{L^1(\Omega)}^2).
\end{align}
Now we focus on the term $I_3$. As $q$ is supported on $B(0, R)$, it follows
\begin{align*}
 I_3 &= \frac{1}{4\pi^2}\int_{|T(x)|>2\bar{R}} \left|\int_{\Omega}\frac{|D_xT(x)|\ |T(y) - (T(y))^*|}{|T(x)-T(y)|\ |T(x)-(T(y))^*|} \cdot q(y) \dif y \right|^2 \dif x \\
 &\leqslant \frac{1}{4\pi^2}\int_{|T(x)|> 2\bar{R}} \left|\int_{\Omega} \frac{(\bar{R}+1) \cdot |D_xT(x)|}{\frac{1}{2}|T(x)|^2} \cdot q(y) \dif y \right|^2 \dif x\leqslant C\|q\|_{L^1(\Omega)}^2.
\end{align*}
where we have taken account of $(\ref{estimate-D_xT})$. Collecting the estimates of $I_1, I_2$ and $I_3$, one arrives at the inequality \eqref{poisson-3-1}.

It remains to check the estimate of  $\|D^2\psi\|_{L^2}$. Noting again that $\Omega \equiv \{ x \in \mathbb{R}^2 | |T(x)| > 1\}$, one infers
\begin{align*}
\|D_{ij}\psi\|_{L^2}^2 &=\frac{1}{4\pi^2} \int_{\Omega}\left | \partial_{x_i} \partial_{x_j} \int_{\Omega} \ln \frac{|T(x) - T(y)}{|T(x) - (T(y))^*| \ |T(y)|} q(y) \dif y \right |^2 \dif x \\
&\leqslant \frac{1}{4\pi^2}\int_{\Omega}\left | \lim_{\epsilon \rightarrow 0}\int_{|T(x) - T(y)| = \epsilon} \frac{\partial_{x_i}|T(x) - T(y)|}{|T(x)-T(y)|}  \nu_j(x) \dif S(x) \right |^2 |q(y)|^2\dif y \\
 &+\frac{1}{4\pi^2}\int_{\Omega}\left|\int_{\Omega} \partial_{x_j}\left( \frac{\partial_{x_i}|T(x) - T(y)|}{|T(x)-T(y)|} \right) q(y) \dif y \right|^2 \dif x\\
 &+\frac{1}{4\pi^2}\int_{\Omega}\left|\int_{\Omega} \partial_{x_j}\left( \frac{\partial_{x_i}|T(x) - (T(y))^*|}{|T(x)-(T(y))^*|} \right) q(y) \dif y \right|^2 \dif x\\
 &=: J_1 + J_2 + J_3. \label{D_ij}
\end{align*}
We will examine  the above terms one by one. Since $T$ is conformal, it is clear to know $\dif S(T^{-1}(\xi)) \equiv |DT^{-1}(\xi)|\dif S(\xi)$. Due to $(\ref{estimate-D_xT})$, the term $J_1$ is bounded by
\begin{align*}
\nonumber J_1 &=  \frac{1}{4\pi^2}\int_{\Omega}\left | \lim_{\epsilon \rightarrow 0}\int_{|T(x) - T(y)| = \epsilon} \frac{\partial_{x_i}|T(x) - T(y)|}{|T(x)-T(y)|}  \nu_j(x) \dif S(x) \right |^2 |q(y)|^2\dif y \\
\nonumber &\leqslant   \frac{1}{4\pi^2}\int_{\Omega}\left | \lim_{\epsilon \rightarrow 0}\int_{|T(x) - T(y)| = \epsilon} \frac{|DT(x)|}{|T(x)-T(y)|} \dif S(x) \right |^2  |q(y)|^2\dif y\\
\nonumber &\leqslant C \|q\|_{L^2(\Omega)}^2.
\end{align*}
From the transformation of variables
\begin{align*}
\eta = T(x) \textit{ and } \xi = T(y),
\end{align*}
we then have checked the term $J_2$ as follows
\begin{align*}
  J_2 &=\frac{1}{4\pi^2}\int_{D^c} \left | \int_{D^c} K(\eta, \eta-\xi) Q_2(\xi)\dif \xi \right|^2 \dif \eta +\frac{1}{4\pi^2}\int_{D^c} \left | \int_{D^c} N(\eta, \eta-\xi) Q_2(\xi)\dif \xi \right|^2 \dif \eta\\
\nonumber &=: J_{21} + J_{22}
\end{align*}
with
\begin{equation*}
\begin{aligned}
 K(\eta, \xi) &:= \partial_{\xi_k} \left(\frac{\partial_{\xi_l}|\xi|} {|\xi|}  \right) D_iT_l(T^{-1}(\eta)) D_jT_k(T^{-1}(\eta)) |DT^{-1}(\eta)|, \\
N(\eta, \xi) &:= \frac{\partial_{\xi_l}|\xi|}{|\xi|} \partial_{ij}T_l(T^{-1}(\eta)) |DT^{-1}(\eta)|,\\
 Q_2(\xi) &:= q(T^{-1}(\xi)) |DT^{-1}(\xi)|.
\end{aligned}
\end{equation*}
One is able to check that $K$ is a singular kernel satisfying the assumptions of Calder\'on-Zygmund Theorem, then one follows
 \begin{equation}
 \nonumber J_{21} \leqslant C \|Q_2\|_{L^2(\Omega)}^2 \leqslant C \|q\|_{L^2(\Omega)}^2.
 \end{equation}
  Recalling that $D_{ij}T(x)$ is bounded in $\Omega$ and satisfies $D_{ij}T(x) = {O}(|x|^{-3})$ for $|x| \rightarrow \infty$, we arrive at
\begin{equation}
\begin{aligned}
\nonumber J_{22} &= \frac{1}{4\pi^2}\int_{D^c} \left | \int_{D^c} N(\eta, \eta-\xi) Q_2(\xi)\dif \xi \right|^2 \dif \eta\\
\nonumber&\leqslant C\left(\int_{|\eta| > 2\bar{R}} \left | \int_{D^c} \frac{1}{|\eta -\xi||\eta|^3}Q_2(\xi)\dif \xi \right|^2 \dif \eta + \int_{|\eta| < 2\bar{R}} \left | \int_{D^c} \frac{1}{|\eta -\xi|}Q_2(\xi)\dif \xi \right|^2 \dif \eta\right).
\end{aligned}
\end{equation}
Since $Q_2$ is supported on $B(0, \bar{R})$, by Young's convolution inequality, we deduce
\begin{equation*}
\begin{aligned}
\nonumber J_{22} &\leqslant C\left(\int_{|\eta| > 2\bar{R}} |\eta|^{-8} \left[ \int_{D^c} |Q_2(\xi)|\dif \xi \right]^2 \dif \eta + \int_{|\eta| < 2\bar{R}} \left | \int_{D^c} \frac{\chi(\frac{\eta-\xi}{3\bar{R}})}{|\eta -\xi|}Q_2(\xi)\dif \xi \right|^2 \dif \eta\right)\\
\nonumber&\leqslant C(R^2\|q\|_{L^2(\Omega)}^2 + \|q\|_{L^1(\Omega)}^2).
\end{aligned}
\end{equation*}
 From the estimates of $J_{21}$ and $J_{22}$,  one yields
\begin{align}
\nonumber J_2 \leqslant C(R^2\|q\|_{L^2(\Omega)}^2 + \|q\|_{L^1(\Omega)}^2).
\end{align}
As
\begin{align}
\nonumber \eta = T(x) \textit{ and } \xi = (T(y))^*,
\end{align}
 we have
\begin{align}
\nonumber J_3 &=\frac{1}{4\pi^2}\int_{\Omega}\left|\int_{\Omega} \partial_{x_j}\left( \frac{\partial_{x_i}|T(x) - (T(y))^*|}{|T(x)-(T(y))^*|} \right) q(y) \dif y \right|^2 \dif x\\
\nonumber &=\int_{D^c} \left | \int_{D} K(\eta, \eta-\xi) Q_3(\xi)\dif \xi \right|^2 \dif \eta +\int_{D^c} \left | \int_{D} N(\eta, \eta-\xi) Q_3(\xi)\dif \xi \right|^2 \dif \eta\\
\nonumber &\leqslant\frac{1}{4\pi^2}\int_{D^c} \left | \int_{\frac{1}{2} < |\xi| < 1} K(\eta, \eta-\xi) Q_3(\xi)\dif \xi \right|^2 \dif \eta + \frac{1}{4\pi^2}\int_{D^c} \left | \int_{|\xi|<\frac{1}{2}} K(\eta, \eta-\xi) Q_3(\xi)\dif \xi \right|^2 \dif \eta\\
\nonumber &\quad +\frac{1}{4\pi^2}\int_{D^c} \left | \int_{\frac{1}{2} <|\xi| < 1} N(\eta, \eta-\xi) Q_3(\xi)\dif \xi \right|^2 \dif \eta +\frac{1}{4\pi^2}\int_{D^c} \left | \int_{|\xi| < \frac{1}{2}} N(\eta, \eta-\xi) Q_3(\xi)\dif \xi \right|^2 \dif \eta\\
\nonumber &=: J_{31} + J_{32} + J_{33} + J_{34}
\end{align}
with
\begin{align}
\nonumber Q_3(\xi) = \frac{|DT^{-1}(\xi^*)|q(T^{-1}(\xi^*))}{|\xi|^4}.
\end{align}
By Calder\'on-Zygmund Theorem, we obtain
\begin{align}
\nonumber J_{31} &= \frac{1}{4\pi^2}\int_{D^c} \left | \int_{\frac{1}{2} < |\xi| < 1} K(\eta, \eta-\xi) Q_3(\xi)\dif \xi \right|^2 \dif \eta\\
\nonumber &\leqslant C \int_{\frac{1}{2} < |\xi| < 1} |Q_3(\xi)|^2\dif \xi\leqslant C \|q\|_{L^2(\Omega)}^2.
\end{align}
Let us now proceed the term $J_{32}$ as
\begin{align}
\nonumber J_{32} &= \frac{1}{4\pi^2}\int_{D^c} \left | \int_{|\xi| < \frac{1}{2}} K(\eta, \eta-\xi) Q_3(\xi)\dif \xi \right|^2 \dif \eta\\
\nonumber &\leqslant C \int_{D^c} \left | \int_{|\xi| < \frac{1}{2}} \frac{1}{|\eta-\xi|^2}|Q_3(\xi)|\dif \xi\right|^2 \dif \eta\\
\nonumber &\leqslant C \int_{D^c} \left | \int_{D}|Q_3(\xi)|\dif \xi\right|^2 \frac{1}{|\eta|^4}\dif \eta \\
\nonumber &\leqslant C \|q\|_{L^1(\Omega)}^2.
\end{align}
We observe that the following fact for $J_{33}$
\begin{align*}
 J_{33} &= \frac{1}{4\pi^2}\int_{D^c} \left | \int_{\frac{1}{2} < \xi < 1} N(\eta, \eta-\xi) Q_3(\xi)\dif \xi \right|^2 \dif \eta\\
&\leqslant C\int_{1 <|\eta| < 2\tilde{R} } \left | \int_{\frac{1}{2} < \xi < 1} N(\eta, \eta-\xi) Q_3(\xi)\dif \xi \right|^2 \dif \eta \\
&\quad +C\int_{|\eta| > 2\tilde{R}} \left | \int_{\frac{1}{2} < \xi < 1} N(\eta, \eta-\xi) Q_3(\xi)\dif \xi \right|^2 \dif \eta\\
&\leqslant C\int_{1 <|\eta| < 2\tilde{R} } \left | \int_{\frac{1}{2} < \xi < 1} \frac{1}{|\eta - \xi|} Q_3(\xi)\dif \xi \right|^2 \dif \eta \\
&\quad+C\int_{|\eta| > 2\tilde{R}} \left | \int_{\frac{1}{2} < \xi < 1} \frac{1}{|\eta-\xi|\,|\eta|^2} Q_3(\xi)\dif \xi \right|^2 \dif \eta.
\end{align*}
By Young's convolution inequality to the first term of the above inequality, we conclude that
\begin{align}
\nonumber J_{33} \leqslant C(\|q\|_{L^1(\Omega)}^2 + R^2\|q\|_{L^2(\Omega)}^2).
\end{align}
For $J_{34}$, it turns out to show
\begin{align*}
 J_{34} &= \frac{1}{4\pi^2}\int_{D^c} \left | \int_{|\xi|<\frac{1}{2}} N(\eta, \eta-\xi) Q_3(\xi)\dif \xi \right|^2 \dif \eta \\
 &\leqslant C\int_{D^c} \left [ \int_{|\xi|<\frac{1}{2}} \frac{1}{|\eta-\xi|\,|\eta|^2} Q_3(\xi) \dif \xi \right]^2\dif \eta\\
  &\leqslant C\|q\|_{L^1(\Omega)}^2.
\end{align*}
Collecting the estimates of $J_{31}$, $J_{32}$, $J_{33}$ and $J_{34}$, we find
\begin{align*}
 J_3 \leqslant C(\|q\|_{L^1(\Omega)}^2 + R^2\|q\|_{L^2(\Omega)}^2).
\end{align*}
All these estimates for $J_1$, $J_2$ and $J_3$ imply
\begin{align}
\|D^2\psi\|_{L^2(\Omega)} \leqslant C(\|q\|_{L^1(\Omega)} + R\|q\|_{L^2(\Omega)}), \label{estimate-J}
\end{align}
which is exactly the inequality \eqref{poisson-3-2}.

At last, we check that the inequality \eqref{esimate-du-m} holds. Taking $L > 0$ such that $\mathbb{R}^2 \setminus \overline{\Omega} \subset\subset B(0, L/2)$, and then choosing $\phi \in C^\infty(\mathbb{R}^2)$ which is a nonnegative function such that $\phi(x) = 0$ for $|x| < L/2$ and $\phi(x)= 1$ for $|x| > L$, one finds that $\phi \psi$ obeys
\begin{equation*}
\nonumber \Delta (\phi \psi) = \phi q + 2 \nabla \phi \cdot \nabla \psi + \Delta \phi \psi \textit{ in } \mathbb{R}^2,
\end{equation*}
therefore, it follows
\begin{equation*}
\begin{aligned}
\nonumber \|D^{s+2} \psi\|_{L^2(\Omega^{L})} &\leqslant \|D^{s+2}(\phi\psi)\|_{L^2(\mathbb{R}^2)} \\
&\leqslant C (\|q\|_{H^s(\Omega)} + \|\psi\|_{H^{s+1}(\Omega_{L})}).
\end{aligned}
\end{equation*}
Noting that $\psi = 0$ on $\Gamma$, by Sobolev interpolation inequality and Poincar\'e inequality, we follow
\begin{equation}\label{poisson-high-1}
\begin{aligned}
\|D^{s+2} \psi\|_{L^2(\Omega^{L})} \leqslant C (\|q\|_{H^s(\Omega)} + \|\nabla\psi\|_{L^2(\Omega)} + \|D^{s+1}\psi\|_{L^2(\Omega)}).
\end{aligned}
\end{equation}
Now it is sufficient to consider the Poisson equation in bounded domain $\Omega_L$. Let us denote $\Pi_{L, 2L} = \{ x \in \mathbb{R}^2 \big| L < |x| < 2L\}$, we arrive at
\begin{equation}\label{poisson-high-tmp}
\begin{aligned}
 \|D^{s+2}\psi\|_{L^2(\Omega_L)} &\leqslant C(\|q\|_{H^s(\Omega)} + \|\psi\|_{H^{s + 2-\frac{1}{2}}(\partial B(0, L))})\\
&\leqslant C(\|q\|_{H^s(\Omega)} + \|\psi\|_{\bm{H}^{s+2}(\Pi_{L, 2L})}).
\end{aligned}
\end{equation}
In view of Gagliardo-Nirenberg  inequality, it follows that
\begin{align}\label{poisson-high-tmp2}
 \|\psi\|_{\bm{H}^{s+2}(\Omega)(\Pi_{L, 2L})} \leqslant C (\|\psi\|_{L^2(\Omega_{2L})} + \|D^{s+2}\psi\|_{L^2(\Omega^L)}),
\end{align}
then by Poincar\'e Inequality, $(\ref{poisson-high-tmp})$ and $(\ref{poisson-high-tmp2})$ immediately bring out
\begin{equation}
\begin{aligned}
\|\psi\|_{\bm{H}^{s+2}(\Omega)(\Pi_{L, 2L})} &\leqslant C(\|q\|_{H^s(\Omega)} + \|\nabla\psi\|_{L^2(\Omega)} + \|D^{s+2}\psi\|_{L^2(\Omega^L)}).\label{poisson-high-2}
\end{aligned}
\end{equation}
From \eqref{poisson-high-1} and \eqref{poisson-high-2}, we obtain that
\begin{align}\label{tmp-poisson-high-2}
\|D^{s+2}\psi\|_{L^2(\Omega)} &\leqslant C(\|q\|_{H^s(\Omega)} + \|D^{s+1}\psi\|_{L^2(\Omega)} + \|\nabla \psi\|_{L^2(\Omega)}),
\end{align}
consequently, we can deduce (\ref{esimate-du-m}) by induction.
\end{proof}

As we know, in simple connected domain of $\mathbb{R}^2$, a divergence free vector field $\bm{u}$ is related to a stream function $\psi$, i.e. $\bm{u} = \nabla^\perp \psi$. For exterior domain of $\mathbb{R}^2$, although it is not simple connected, we still have similar result.
\begin{lemma}\label{stream-function-lemma} Let the vector field $\bm{u} \in L_\sigma^2 \cap \bm{H}^1(\Omega)$, then there exists a scalar function $\psi$ such that $\bm{u} \equiv \nabla^\perp \psi$.
\end{lemma}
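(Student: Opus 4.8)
The plan is to produce the stream function by integrating $\bm{u}^\perp = (-u_2, u_1)$ against paths, after first disposing of the topological obstruction coming from the hole $\mathcal{O}$. Concretely, I would proceed as follows. Since $\bm{u} \in \bm{H}^1(\Omega)$ and $\operatorname{div}\bm{u} = 0$, the one-form $\omega := -u_2\,\mathrm{d}x_1 + u_1\,\mathrm{d}x_2$ is closed in the distributional sense on $\Omega$. The only impediment to writing $\omega = \mathrm{d}\psi$ for a single-valued $\psi$ is the possible nonvanishing of the circulation $\gamma := \oint_{\Gamma} \bm{u}\cdot\tau\,\mathrm{d}S$ around the obstacle (equivalently the period of $\omega$ over the generator of $H_1(\Omega)$). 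The condition $\bm{u}\cdot\nu|_\Gamma = 0$ from $\bm{u} \in L^2_\sigma$ does not by itself kill this circulation, so the first real step is to show $\gamma = 0$; I expect this to follow from the combination $\bm{u}\in\bm{H}^1_0(\Omega)$ is \emph{not} assumed, but $\bm{u}\in\bm{H}^1(\Omega)$ with $\bm{u}\cdot\nu|_\Gamma=0$ — here one should instead subtract off the harmonic circulation-carrying field.

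Let me restructure. I would first use the Helmholtz-type decomposition already invoked in Remark~\ref{remark-harmonic}: write $\bm{u} = \bm{u}_0 + \bm{h}$ where $\bm{h}$ is the $L^2$ harmonic field with the prescribed circulation around $\Gamma$ and $\bm{u}_0$ has zero circulation; but Lemma~\ref{lemma-harmonic} asserts $X^2_{har}(\Omega) = \{0\}$, so in fact \emph{any} $\bm{u} \in L^2_\sigma(\Omega)$ with $\operatorname{rot}\bm{u}$ decaying has automatically vanishing circulation at infinity, and since $\bm{u}$ is also in $\bm{H}^1$ one gets $\oint_{\Gamma}\bm{u}\cdot\tau = \oint_{\partial B(0,L)}\bm{u}\cdot\tau \to 0$ as $L\to\infty$ by Cauchy--Schwarz on the annulus $\Omega^L$ together with $\nabla\bm{u}\in L^2$. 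Hence $\gamma = 0$. With the circulation gone, define
\begin{equation}\label{stream-def-proposal}
\psi(x) := \int_{x_*}^{x} \left(-u_2\,\mathrm{d}y_1 + u_1\,\mathrm{d}y_2\right),
\end{equation}
the line integral along any rectifiable path in $\Omega$ from a fixed basepoint $x_*$ to $x$; path-independence holds because $\omega$ is closed and has zero period over the one generating loop. Then $\nabla\psi = (-u_2, u_1) = \nabla^\perp\psi$ in the sense $\partial_{x_1}\psi = -u_2$, $\partial_{x_2}\psi = u_1$, i.e. $\bm{u} = \nabla^\perp\psi$ after matching the sign convention $\nabla^\perp = (-\partial_{x_2}, \partial_{x_1})$.

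A cleaner alternative I would actually prefer to write up: work on the simply connected domain $\widetilde{\Omega} := \mathbb{R}^2 \setminus \overline{\mathcal{O}}$ is not simply connected, so instead fill the hole. Extend $\bm{u}$ to a divergence-free field $\widetilde{\bm{u}} \in \bm{H}^1_{loc}(\mathbb{R}^2)$ with $\operatorname{div}\widetilde{\bm{u}} = 0$ on all of $\mathbb{R}^2$: this is possible precisely because $\bm{u}\cdot\nu|_\Gamma = 0$, so one can extend by solving $\operatorname{div}\widetilde{\bm{u}} = 0$ inside $\mathcal{O}$ with matching normal trace zero (e.g. extend the corresponding stream trace, which is constant on $\Gamma$, harmonically). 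On the simply connected set $\mathbb{R}^2$ a divergence-free $\bm{H}^1_{loc}$ field is a perpendicular gradient, $\widetilde{\bm{u}} = \nabla^\perp\widetilde\psi$ with $\widetilde\psi \in H^2_{loc}(\mathbb{R}^2)$; restrict $\psi := \widetilde\psi|_\Omega$. Either route gives the claim; I would present the extension argument since it is shortest and reuses only standard facts.

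\textbf{Main obstacle.} The one genuinely nontrivial point is the topology: on the exterior domain a closed one-form need not be exact, so the proof must explicitly rule out a nonzero period, and the hypothesis that makes this work is the conjunction $\bm{u}\in L^2_\sigma$ (giving $\bm{u}\cdot\nu|_\Gamma = 0$) \emph{and} $\bm{u}\in\bm{H}^1(\Omega)$ (giving the decay $\nabla\bm{u}\in L^2(\Omega)$, hence $\oint_{\partial B(0,L)}\bm{u}\cdot\tau\to 0$), with Lemma~\ref{lemma-harmonic} doing the work of identifying the circulation as the only obstruction. Everything after that — path-independence of \eqref{stream-def-proposal}, or the divergence-free extension across $\mathcal{O}$, and the regularity $\psi\in H^2_{loc}$ — is routine. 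I would also remark that $\psi$ is unique up to an additive constant, and if one additionally wants $\psi\to$ const at infinity one uses $\bm{u}\to 0$ there.
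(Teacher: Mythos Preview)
Your overall strategy is sound, but there is a concrete slip in the first route that, once fixed, actually makes the argument \emph{simpler}. The period of the one-form $\omega = -u_2\,\mathrm{d}x_1 + u_1\,\mathrm{d}x_2$ around $\Gamma$ is
\[
\oint_\Gamma\omega \;=\; \oint_\Gamma(-u_2\tau_1 + u_1\tau_2)\,\mathrm{d}s \;=\; \pm\oint_\Gamma\bm{u}\cdot\nu\,\mathrm{d}s,
\]
i.e.\ the \emph{flux} of $\bm{u}$ through $\Gamma$, not the circulation $\oint_\Gamma\bm{u}\cdot\tau$. (Existence of a stream function is obstructed by the flux; it is the \emph{velocity potential} of an irrotational field that sees the circulation.) Since $\bm{u}\in L^2_\sigma$ gives $\bm{u}\cdot\nu|_\Gamma=0$, the period vanishes immediately and your entire decay-at-infinity discussion is unnecessary. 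The specific claim that $\oint_{\partial B(0,L)}\bm{u}\cdot\tau\to 0$ follows from $\nabla\bm{u}\in L^2$ by Cauchy--Schwarz is in any case not justified as written --- the circle has length $2\pi L$, and neither $\bm{u}\in L^2$ nor $\nabla\bm{u}\in L^2$ alone compensates for that growth --- but this becomes moot once the correct period is identified. Your extension-by-zero route is correct as stated (zero extension of $\bm{u}$ is distributionally divergence-free on $\mathbb{R}^2$ precisely because $\bm{u}\cdot\nu|_\Gamma=0$) and is arguably the cleanest proof.

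For comparison, the paper takes an entirely different and considerably heavier route: it \emph{constructs} $\psi$ explicitly as
\[
\psi(x)=\int_\Omega G_\Omega(x,y)\,\nabla_y^\perp\!\cdot\bm{u}(y)\,\mathrm{d}y
\]
with the exterior-domain Green's function \eqref{greenfuction}, then proves $\nabla\psi\in L^2(\Omega)$ by a duality argument --- testing against $\phi\in C_0^\infty(\Omega)$, pushing the kernel through the conformal map $T:\Omega\to D^c$, and invoking Calder\'on--Zygmund --- and finally uses Lemma~\ref{lemma-harmonic} ($X^2_{har}(\Omega)=\{0\}$) to conclude $\bm{u}-\nabla^\perp\psi=0$. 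This buys the explicit formula, the boundary condition $\psi|_\Gamma=0$, and a quantitative bound $\|\nabla\psi\|_{L^2}\le C\|\bm{u}\|_{L^2}$, all consistent with the Green's-function machinery used elsewhere in the paper. Your argument gives the bare existence statement with far less work, and for the downstream application (the truncation in Lemma~\ref{approximate-lemma}, which only uses $\psi-a_n$) that already suffices.
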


\begin{proof}
As above,  the Green function of the Laplacian in $\Omega$ is given by \eqref{greenfuction}, and we now show that the following function
\begin{equation*}
\psi(x) := \int_{\Omega} G_{\Omega}(x, y)  \nabla^\perp_y\cdot \bm{u}(y) \dif y
\end{equation*}
is the stream function function of $\bm{u}$ with  $\bm{u} \equiv \nabla^\perp \psi$. Let $\bm{w} = \bm{u} - \nabla^\perp \psi$. In view of Lemma $\ref{lemma-harmonic}$, it suffices to verify that $\bm{w}$ satisfies
\begin{itemize}
    \item[(i)] $\nabla \cdot \bm{w} = 0 \text{ in } \Omega$;
  \item[(ii)] $\nabla^\perp \cdot \bm{w} = 0 \text{ in } \Omega$;
  \item[(iii)] $\bm{w} \cdot \nu \equiv 0 \text{ on } \Gamma$;
    \item[(iv)] $\bm{w} \in L^2(\Omega)$,
\end{itemize}
where $\nu$ is the normal vector to $\Gamma$. It is easy to check that condition (i)-(iii) hold. We here only verify the condition (iv), equivalently, we need to check $\nabla \psi \in L^2(\Omega)$. Indeed, $\forall \phi \in C_0^\infty(\Omega)$, by integrating by parts, it follows
\begin{equation*}
\begin{aligned}
(\partial_i \psi, \phi)_{L^2} &= - (\psi, \partial_i \phi)_{L^2}\\
&= -\int_{\Omega} \left[\int_{\Omega} G_{\Omega}(y, x) \nabla^\perp_y \cdot \bm{u}(y) \dif y \right]\partial_i\phi(x) \dif x \\
&= \int_{\Omega} \left[\int_{\Omega} \left[\nabla^\perp_y G_{\Omega}(y, x)\right] \cdot \bm{u}(y) \dif y \right]\partial_i\phi(x) \dif x \\
&=  \int_{\Omega} \left[\int_{\Omega} \partial_i \phi(x) \left[ \nabla^\perp_y G_{\Omega}(y, x)\right] \dif x \right]\cdot \bm{u}(y) \dif y \\
&=  \frac{1}{4\pi^2}\int_{\Omega} \left[\lim_{\epsilon \rightarrow 0} \int_{|T(x) - T(y)| = \epsilon} \nu_i \phi(x) \left[ \nabla^\perp_y \ln|T(y) - T(x)|\right] \dif S(x) \right]\cdot \bm{u}(y) \dif y \\
&\quad- \frac{1}{4\pi^2}\int_{\Omega} \left[\int_{\Omega} \phi(x) \left[ \partial_{x_i} \nabla^\perp_y \ln|T(y) - T(x)|\right] \dif x \right]\cdot \bm{u}(y) \dif y \\
&\quad- \frac{1}{4\pi^2}\int_{\Omega} \left[\int_{\Omega} \phi(x) \left[ \partial_{x_i} \nabla^\perp_y \ln|T(y) - (T(x))^*|\right] \dif x \right]\cdot \bm{u}(y) \dif y \\
&=: L_1 + L_2 + L_3,
\end{aligned}
\end{equation*}
where  the property that $G(x,y) = G(y,x)$ in $\Omega$ was used.   From $\|DT\|_{L^\infty} < \infty$ and H{\"o}lder inequality, The term $L_1$ can be bounded
\begin{equation}\label{estimate_l_1}
\begin{aligned}
|L_1| &= \left|\frac{1}{4\pi^2}\int_{\Omega} \left[\lim_{\epsilon \rightarrow 0} \int_{|T(x) - T(y)| = \epsilon} \nu_i \phi(x) \left[ \nabla^\perp_y \ln|T(y) - T(x)|\right] \dif S(x) \right]\cdot \bm{u}(y) \dif y \right|\\
&\leqslant \frac{1}{4\pi^2}\int_{\Omega} \left[\lim_{\epsilon \rightarrow 0} \int_{|T(x) - T(y)| = \epsilon} \phi(x) \frac{|DT(y)|}{|T(y) - T(x)|} \dif S(x)\right] |\bm{u}(y)| \dif y\\
&\leqslant C\int_{\Omega} |\phi(y)| |\bm{u}(y)| \dif y\\
&\leqslant C\|\phi\|_{L^2(\Omega)} \|\bm{u}\|_{L^2(\Omega)}.
\end{aligned}
\end{equation}
by making change of variable, the second term $L_2$ can be written by
\begin{equation*}
\begin{aligned}
L_2 &= -\frac{1}{4\pi^2}\int_{\Omega} \left[\int_{\Omega} \phi(x) \left[ \partial_{x_i} \nabla^\perp_y \ln|T(y) - T(x)|\right] \dif x \right]\cdot \bm{u} \dif y \\
&= -\frac{1}{4\pi^2}\int_{\Omega}\left[\int_{\Omega} \phi(x)\partial_{x_i} \frac{\nabla^\perp_y |T(y) - T(x)|}{|T(y) - T(x)|}\dif x\right] \cdot \bm{u}(y) \dif y\\
&\xlongequal[\xi = T(y)]{\eta = T(x)}-\frac{1}{4\pi^2} \int_{D^c}\int_{D^c} \bm{K}_{kl}(\xi-\eta) \bm{\Phi}_{ik}(\eta)\bm{U}_{l}(\xi) \dif \eta \dif \xi
\end{aligned}
\end{equation*}
with
\begin{equation}\label{kernel_lemma_approximation}
\begin{aligned}
\bm{K}_{kl}(\eta) &:=  -\partial_{\eta_k}\left[\frac{\partial_{\eta_l}|\eta|}{|\eta|}\right]\\
\bm{\Phi}_{ik}(\eta) &:= \phi(T^{-1}(\eta)) (\partial_i T_k)(T^{-1}(\eta))|DT^{-1}(\eta)|\\
 \bm{U}_{l}(\xi) &:= (\nabla^\perp T_l)(T^{-1}(\xi))\cdot \bm{u}(T^{-1}(\xi)) |DT^{-1}(\xi)|.
\end{aligned}
\end{equation}
As $\bm{K}_{kl}$ is singular kernel satisfying Calder\'on-Zygmund Theorem, it is easy to obtain
\begin{equation*}
\begin{aligned}
|L_2| \leqslant C\|\bm{\Phi}\|_{L^2(\Omega)}\|\|\bm{U}\|_{L^2(\Omega)},
\end{aligned}
\end{equation*}
Taking account of property ($\ref{estimate-D_xT}$), the above inequality then implies
\begin{equation}\label{estimate_l_2}
\begin{aligned}
|L_2| \leqslant C\|\phi\|_{L^2(\Omega)}\|\|\bm{u}\|_{L^2(\Omega)}.
\end{aligned}
\end{equation}
Similarly, we change the term $L_3$ into the following formula
\begin{equation*}
\begin{aligned}
L_3 &= -\frac{1}{4\pi^2}\int_{\Omega} \left[\int_{\Omega} \phi(x) \left[ \partial_{x_i} \nabla^\perp_y \ln|T(y) - (T(x))^*|\right] \dif x \right]\cdot \bm{u}(y) \dif y \\
&= -\frac{1}{4\pi^2}\int_{\Omega}\left[\int_{\Omega} \phi(x)\partial_{x_i} \frac{\nabla^\perp_y |T(y) - (T(x))^*|}{|T(y) - T(x)|}\dif x\right] \cdot \bm{u}(y) \dif y\\
&\xlongequal[\xi = T(y)]{\eta = (T(x))^*} -\frac{1}{4\pi^2}\int_{D^c}\int_{D} \bm{K}_{kl}(\xi-\eta) \Theta_{ik}(\eta)\bm{U}_{l}(\xi) \dif \eta \dif \xi
\end{aligned}
\end{equation*}
where $\bm{K}$ and $\bm{U}$ are defined in (\ref{kernel_lemma_approximation}), while $\Theta$ is defined as
\begin{equation*}
\begin{aligned}
\Theta_{ik}(\eta) = \phi(T^{-1}(\eta^*))\frac{|DT^{-1}(\eta^*)|}{|\eta|^2}(\partial_iT_k(T^{-1}(\eta^*)) - 2 |\eta|^2\eta^*_k\eta^*_j\partial_iT_j(T^{-1}(\eta^*))).
\end{aligned}
\end{equation*}
Again by Calder\'on-Zygmund Theorem and \eqref{estimate-D_xT}, we arrive at
\begin{equation}\label{stream-funtion-L-3-1}
\begin{aligned}
|L_3| &\leqslant C\|\Theta\|_{L^2(D)}\|\|\bm{U}\|_{L^2(D^c)}\\
&\leqslant C\|\Theta\|_{L^2(D)}\|\|\bm{u}\|_{L^2(\Omega)}.
\end{aligned}
\end{equation}
 We now show that $\Theta \in L^2(D)$, indeed,
\begin{equation}\label{stream-funtion-L-3-2}
\begin{aligned}
\|\Theta\|_{L^2(D)}^2 &= \int_D |\Theta(\eta)|^2 \dif \eta \\
&= \int_D \left|\phi(T^{-1}(\eta^*))\frac{|DT^{-1}(\eta^*)|}{|\eta|^2}(\partial_iT_k(T^{-1}(\eta^*)) - 2 |\eta|^2\eta^*_k\eta^*_j\partial_iT_j(T^{-1}(\eta^*)))\right|^2 \dif \eta\\
&\leqslant C\int_D \frac{|\phi(T^{-1}(\eta^*))|^2}{|\eta|^4}  \dif \eta.\\
\end{aligned}
\end{equation}
 By variable substitution, it follows
\begin{equation}\label{stream-funtion-L-3-3}
\begin{aligned}
\int_D \frac{|\phi(T^{-1}(\eta^*))|^2}{|\eta|^4}  \dif \eta &\xlongequal{\eta = (T(x))^*}\int_{\Omega} \frac{|\phi(x)|^2}{|T^*(x)|^4} \frac{1}{|T(x)|^4} |DT(x)| \dif x\\
&\leqslant C \int_\Omega |\phi(x)|^2 \dif x.
\end{aligned}
\end{equation}
By ($\ref{stream-funtion-L-3-1}$), ($\ref{stream-funtion-L-3-2}$) and ($\ref{stream-funtion-L-3-3}$), we thus obtain
\begin{equation}\label{estimate_l_3}
|L_3| \leqslant C \|\phi\|_{L^2(\Omega)}\|\bm{u}\|_{L^2(\Omega)}.
\end{equation}
Collecting (\ref{estimate_l_1}), (\ref{estimate_l_2}) and (\ref{estimate_l_3}), we conclude that
\begin{equation*}
|(\partial_i \psi, \phi)_{L^2(\Omega)}| \leqslant C\|\phi\|_{L^2(\Omega)} \|\bm{u}\|_{L^2(\Omega)}.
\end{equation*}
Therefore the condition (iv) holds, in virtue of Lemma \ref{lemma-harmonic}, we have $\bm{u} \equiv \nabla^\perp \psi$.
\end{proof}

In this article, we will first consider the special case that the initial \textit{unfiltered} is compactly supported. The following lemma helps to construct such an approximate sequence for more general initial data $\bm{u}_0$.
\begin{lemma}\label{approximate-lemma} Assume that $\bm{u} \in V \cap \bm{H}^s(\Omega)$, $s \geq 1$. Then there exists a approximate sequence $\{\bm{u}^n\} \subset V \cap \bm{H}^s(\Omega)$ such that $\bm{u}^n $ is compactly supported and converges to $\bm{u}$ in $\bm{H}^s(\Omega)$ strongly.
\end{lemma}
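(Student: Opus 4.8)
The plan is to approximate $\bm{u}$ by truncating it with a smooth cutoff that vanishes near the obstacle boundary's far field, then correct the resulting loss of divergence-freeness using a Bogovski\u{\i}-type operator. First I would fix $L_0 > 0$ large enough that $\mathbb{R}^2 \setminus \overline{\Omega} \subset B(0, L_0/2)$ and pick a family of cutoffs: for $n$ large, let $\chi_n \in C^\infty(\mathbb{R}^2)$ with $\chi_n(x) = 1$ for $|x| \leqslant n$, $\chi_n(x) = 0$ for $|x| \geqslant 2n$, and $|D^k \chi_n| \leqslant C n^{-k}$ for $0 \leqslant k \leqslant s$. Set $\bm{w}^n := \chi_n \bm{u}$. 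Then $\bm{w}^n \in \bm{H}^s(\Omega)$, it is compactly supported, it vanishes on $\Gamma$ (since $\bm{u} \in V$, so $\bm{u}|_\Gamma = 0$ and $\chi_n \equiv 1$ near $\Gamma$ for $n$ large), and $\bm{w}^n \to \bm{u}$ in $\bm{H}^s(\Omega)$ — the latter because $\bm{u}(1 - \chi_n)$ is supported in $|x| \geqslant n$ where the $\bm{H}^s$-mass of $\bm{u}$ tends to zero, and the commutator terms $D^\beta \chi_n \, D^\gamma \bm{u}$ with $|\beta| \geqslant 1$ are bounded by $C n^{-|\beta|} \|\bm{u}\|_{\bm{H}^s(|x| \geqslant n)} \to 0$.

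The obstruction is that $\bm{w}^n$ is no longer divergence-free: $\mathrm{div}\, \bm{w}^n = \nabla \chi_n \cdot \bm{u} =: g_n$, which is supported in the annulus $A_n := \{ n \leqslant |x| \leqslant 2n\}$ and satisfies $\int_{A_n} g_n \, dx = \int_{A_n} \mathrm{div}(\chi_n \bm{u})\,dx = 0$ by the divergence theorem (using that $\chi_n \bm{u}$ vanishes on $\partial A_n$ and $\mathrm{div}\,\bm{u} = 0$; more directly $\int_\Omega \mathrm{div}\,\bm{w}^n\,dx = 0$ since $\bm{w}^n$ is compactly supported and $\bm{w}^n \cdot \nu|_\Gamma = 0$). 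Since $g_n$ has zero mean on the annulus $A_n$, the Bogovski\u{\i} operator on $A_n$ produces $\bm{z}^n \in \bm{H}_0^s(A_n)$ (extended by zero to $\Omega$) with $\mathrm{div}\, \bm{z}^n = g_n$ in $A_n$ and, crucially, the scale-invariant bound $\|D^k \bm{z}^n\|_{L^2(A_n)} \leqslant C \|D^{k-1} g_n\|_{L^2(A_n)}$ for $1 \leqslant k \leqslant s$ with $C$ independent of $n$ (the annuli $A_n$ are all dilates of the fixed annulus $\{1 \leqslant |x| \leqslant 2\}$, so the Bogovski\u{\i} constant does not blow up — one verifies the dilation scaling of the estimate explicitly). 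Then $\|D^k \bm{z}^n\|_{L^2} \leqslant C \sum_{j} n^{-1-j}\|D^{k-1-j}\bm{u}\|_{L^2(A_n)} \leqslant C \|\bm{u}\|_{\bm{H}^s(|x|\geqslant n)} \to 0$.

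Finally I set $\bm{u}^n := \bm{w}^n - \bm{z}^n$. By construction $\bm{u}^n$ is compactly supported (contained in $B(0,2n)$), $\mathrm{div}\,\bm{u}^n = g_n - g_n = 0$ in $\Omega$, and $\bm{u}^n = 0$ on $\Gamma$ because $\bm{w}^n$ vanishes there and $\bm{z}^n$ is supported in $A_n$ away from $\Gamma$; hence $\bm{u}^n \in V \cap \bm{H}^s(\Omega)$. Combining $\|\bm{w}^n - \bm{u}\|_{\bm{H}^s} \to 0$ with $\|\bm{z}^n\|_{\bm{H}^s} \to 0$ gives $\bm{u}^n \to \bm{u}$ in $\bm{H}^s(\Omega)$, which is the assertion. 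The main technical point to get right is the $n$-independence of the Bogovski\u{\i} constant on the dilating annuli $A_n$; this follows from the standard dilation argument, but it must be stated carefully since a naive application of the Bogovski\u{\i} estimate on a growing domain would give a constant growing with $n$ and kill the argument. (Alternatively, if one prefers, on each annulus $A_n$ one may instead solve $\Delta \varphi_n = g_n$ in $A_n$ with $\varphi_n \in H^{s+1} \cap H_0^1$ and set $\bm{z}^n = \nabla \varphi_n + (\text{a curl correction to kill the tangential trace})$, but the Bogovski\u{\i} route is cleaner.)
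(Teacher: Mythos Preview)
Your proof is correct but takes a genuinely different route from the paper. The paper exploits the two-dimensional stream-function representation established in the preceding lemma: since $\bm{u}=\nabla^\perp\psi$, one sets $\bm{u}^n:=\nabla^\perp\bigl(\phi^n(\psi-a_n)\bigr)$, where $\phi^n(x)=\phi(x/n)$ is the radial cutoff and $a_n$ is the average of $\psi$ over the annulus $\Pi_{n,2n}$. This is automatically divergence-free, so no Bogovski\u{\i} correction is needed; convergence of the only delicate term $(\psi-a_n)D^\alpha\phi^n$ follows from Poincar\'e's inequality on the rescaled annulus. Your cutoff-plus-Bogovski\u{\i} construction is dimension-independent and does not rely on the stream-function lemma, whereas the paper's route avoids the Bogovski\u{\i} machinery altogether --- in particular the decomposition of the non-star-shaped annulus into star-shaped pieces, and the verification that $g_n\in H^{s-1}_0(A_n)$ (needed so that the zero extension of $\bm{z}^n$ lies in $\bm{H}^s(\Omega)$). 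Two small corrections to your write-up: the claim that $\chi_n\bm{u}$ vanishes on $\partial A_n$ is false at $|x|=n$, though your alternative argument for $\int g_n=0$ via the divergence theorem on all of $\Omega$ is fine; and the scale-invariant Bogovski\u{\i} estimate should involve $\|g_n\|_{H^{k-1}(A_n)}$ rather than only $\|D^{k-1}g_n\|_{L^2}$, though this does not affect the conclusion.
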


\begin{proof}
Without loss of generality, we assume $\mathbb{R}^2 \setminus \overline{\Omega} \subset\subset B(0, 1)$. Now let $\phi \in C_0^\infty(\mathbb{R}^2)$ be a non-negative function such that
\begin{itemize}
  \item[(i)] $0 \leqslant \phi \leqslant 1$ in $\mathbb{R}^2$;
  \item[(ii)] $\phi \equiv 1$ in $B(0,1)$;
  \item[(iii)] $\phi \equiv 0$ in $\mathbb{R}^2 \setminus \overline{B(0, 2)}$.
\end{itemize}
We will use the following notations in the following context
\begin{equation*}
\begin{aligned}{}
\phi^n(x) := \phi(\frac{x}{n}),\\
\Pi_{n, 2n} := \{ x \in \mathbb{R}^2 \big| n < |x| < 2n \}.
\end{aligned}
\end{equation*}
Let $\psi$ be the stream function of $\bm{u}$ constructed in lemma \ref{stream-function-lemma}, we define the approximate sequence $\{\bm{u}^n\}$ as
\begin{equation*}
\bm{u}^n := \nabla ^ \perp (\phi^n (\psi - a_n)),
\end{equation*}
with
\begin{equation*}
a_n = \frac{1}{|\Pi_{n, 2n}|}\int_{\Pi_{n, 2n}} \psi(x) \dif x.
\end{equation*}
Noting that $\phi$ is compactly supported, it follows $\bm{u}^n$ is compactly supported. It suffices to prove that $\bm{u}^n$ converges to $\bm{u}$ in $H^s$. Since
\begin{equation*}
\begin{aligned}
\nonumber \|\bm{u}^n - \bm{u}\|_{H^s(\Omega)}^2 &= \sum_{|\alpha| \leqslant s} \|D^\alpha \bm{u}^n - D^\alpha\bm{u}\|_{L^2}^2 \\
\nonumber &= \sum_{|\alpha| \leqslant s} \| D^\alpha (\nabla^\perp (\phi^n (\psi-a_n))) - D^\alpha\bm{u}\|_{L^2}^2 \\
&\leqslant \sum_{|\alpha|\leqslant s} \|(\phi^n - 1)D^\alpha\bm{u}\|_{L^2}^2 + \sum_{1 \leqslant|\alpha| \leqslant s, |\beta| \leqslant s} C_{\alpha, \beta}\|D^\alpha \phi^n D^\beta (\psi-a_n)\|_{L^2(\Omega)}^2,
\end{aligned}
\end{equation*}
it is obvious  to see that the first term of the above inequality converges to zero as $n \rightarrow \infty$. For the second one, we treat $\beta = 0$ and $\beta \neq 0$ respectively. In the case $\beta = (\beta_1, \beta_2)\neq 0$, it follows
\begin{equation*}
\begin{aligned}
\|D^\alpha\phi^n D^\beta (\psi- a_n)\|_{L^2}^2 &\leqslant \|D^\alpha\phi^n D^{|\beta| - 1}\bm{u}\|_{L^2}^2\\
&\leqslant \frac{C_\alpha}{n^{2|\alpha|}} \|\bm{u}\|_{\bm{H}^s(\Omega)}^2.
\end{aligned}
\end{equation*}
As $\alpha > 0$, we deduce that
\begin{align}
 \|D^\alpha\phi^n D^\beta (\psi- a_n)\|_{L^2(\Omega)}^2 \rightarrow 0  \, as \, n \rightarrow \infty. \label{app-lemma-1}
\end{align}
In the case $\beta = 0$, by Poincar\'e inequality, it follows
\begin{equation}
\begin{aligned}
\nonumber \|D^\alpha\phi^n (\psi-a_n)\|_{L^2}^2 &= \int_{\Omega} |D^\alpha \phi^n(x) (\psi(x) -a_n)|^2 \dif x\\
\nonumber &\leqslant \frac{C_\alpha}{n^{2|\alpha|}} \int_{ n \leqslant |x| \leqslant 2n} |\psi(x) - a_n|^2 \dif x\\
\nonumber &\leqslant \frac{C_\alpha}{n^{2|\alpha| - 2}} \int_{ 1 \leqslant |y| \leqslant 2} |\psi(ny) -a_n|^2 \dif y\\
\nonumber &\leqslant \frac{C_\alpha}{n^{2|\alpha|}} \int_{ 1 \leqslant |y| \leqslant 2} |\bm{u}(ny)|^2  \dif y\\
&\leqslant \frac{C_\alpha}{n^{2|\alpha| - 2}} \int_{ n \leqslant |x| \leqslant 2n} |\bm{u}(x)|^2  \dif x.
\end{aligned}
\end{equation}
Since $\bm{u} \in \bm{H}^s(\Omega)$ and $|\alpha| \geq 1 $, the above inequality implies that
\begin{align}
\|D^\alpha\phi^n (\psi -a_n)\|_{L^2(\Omega)}^2 \rightarrow 0 \,~\mbox{as}~ \,n \rightarrow \infty. \label{app-lemma-2}
\end{align}
By $(\ref{app-lemma-1})$ and $(\ref{app-lemma-2})$, it follows that $\bm{u}^n$ converges to $\bm{u}$ in $\bm{H}^s(\Omega)$ strongly.
\end{proof}
\section{Global existence theorems to Euler-$\alpha$ equations in 2D exterior domain}
In this whole section, without loss of generality, we set $\alpha = 1$, and the equations $(\ref{euler-alpha-1})-(\ref{euler-alpha-2})$ with initial data and boundary conditions \eqref{Dirichlet} and \eqref{inftydecay} are been rewritten as
\begin{numcases}{}
  \partial_t \bm{v} + \bm{u} \cdot {\nabla} \bm{v} + (\nabla \bm{u})^t \cdot \bm{v} + \nabla p = 0
    \qquad \qquad  \qquad \qquad  &  $\text{in}  \ \Omega \times (0, T), $ \label{euler-alpha-1r}  \qquad \   \qquad  \qquad \qquad \\
    \text{div} \ \bm{u} = 0 &  $\text{in} \ \Omega \times [0, T) $ \label{euler-alpha-2r},\\
    \bm{u} = 0  & $\text{in} \ \Gamma \times [0, T] \label{euler-alpha-3r}$,\\
    \bm{u}|_{t=0} = \bm{u}_0 & $\text{in} \ \Omega $\label{euler-alpha-4r},\\
    \bm{u} (x, t) \to 0 & $\forall t \in [0, T), |x| \to \infty $ \label{euler-alpha-5r},
\end{numcases}
where $\bm{v} = \bm{u} -\Delta \bm{u}$, $\bm{u}_0$ is the initial $\textit{filtered}$ velocity, and $T > 0$ is arbitrary.
Let us firstly consider the special case that the initial  \textit{unfiltered} vorticity $q_0$ is compactly supported, we have the following proposition.
\begin{prop} \label{proposition-1}
 Assume that any $T>0$ and the initial \textit{filtered} velocity $\textbf{u}_0 \in \bm{H}^3(\Omega) \cap V$ with the initial \textit{unfiltered} vorticity $q_0$ is compactly supported, then the equations $(\ref{euler-alpha-1r})-(\ref{euler-alpha-5r})$ has a unique weak solution $\textbf{u} \in L^\infty(0, T; \bm{H}^3(\Omega))\cap C([0,T]; V)$ in the following sense, for any $\phi \in C^\infty([0, T]; \mathcal{D})$, the indentity
\begin{equation}\label{energy-formula}
\begin{aligned}
 (\bm{u}(t), \phi)_{L^2(\Omega)} + (\nabla \bm{u}(t), \nabla \phi) - (\bm{u}_0, \phi) - (\nabla \bm{u}_0, \nabla \phi)_{L^2(\Omega)} \\
= \int_0^t(\bm{v}, \phi_t)_{L^2(\Omega)} - \int_0^t (\bm{u}\cdot \nabla \bm{v} + (\nabla \bm{u})^t \cdot \bm{v}, \phi)_{L^2(\Omega)}
\end{aligned}
\end{equation}
holds for $t\in[0, T]$, where $\bm{v} = \bm{u} - \Delta\bm{u}$.
\end{prop}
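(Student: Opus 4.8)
The plan is to run the linearized iteration suggested by the vorticity--stream formulation $(\ref{vstream1})$--$(\ref{vstream3})$ and close it with the Banach fixed point theorem, after first establishing uniform-in-$n$ bounds that are \emph{global in time}.

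\textbf{Iteration scheme.} Set $\bm{u}^{(0)}\equiv\bm{u}_0$. Given $\bm{u}^{(n)}\in L^\infty(0,T;\bm{H}^3(\Omega)\cap V)$, let $q^{(n+1)}$ solve the transport problem $\partial_t q^{(n+1)}+\bm{u}^{(n)}\cdot\nabla q^{(n+1)}=0$, $q^{(n+1)}|_{t=0}=q_0$ (Lemma \ref{lemma-transport}); let $\psi^{(n+1)}$ solve $\Delta\psi^{(n+1)}=q^{(n+1)}$, $\psi^{(n+1)}|_\Gamma=0$ (Lemma \ref{lemma-poisson}, legitimate since $q^{(n+1)}(t)$ stays compactly supported by Remark \ref{remark-transport-equation}); finally let $\bm{u}^{(n+1)}$ solve the Stokes problem $\bm{u}^{(n+1)}-\Delta\bm{u}^{(n+1)}+\nabla p^{(n+1)}=\nabla^\perp\psi^{(n+1)}$, $\bm{u}^{(n+1)}|_\Gamma=0$ (Lemma \ref{lemma-stokes}), which applies because $\nabla^\perp\psi^{(n+1)}\in L^2_\sigma\cap\bm{H}^1(\Omega)$: it is divergence free, its normal trace on $\Gamma$ is $\partial_\tau\psi^{(n+1)}=0$, and $D^2\psi^{(n+1)}\in L^2$ by \eqref{poisson-3-2}. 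Note that this already produces $\bm{u}^{(n+1)}\in\bm{H}^3(\Omega)$ without requiring any derivative control on $q^{(n+1)}$.

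\textbf{Global uniform bounds.} By Lemma \ref{lemma-transport}, $\|q^{(n+1)}(t)\|_{L^p(\Omega)}\le\|q_0\|_{L^p(\Omega)}$ for $p\in\{1,2\}$, while Remark \ref{remark-transport-equation} gives $R_{n+1}(t):=\delta(\supp q^{(n+1)}(t))\le R_0+C\int_0^t\|\bm{u}^{(n)}(s)\|_{\bm{H}^2(\Omega)}\dif s$, where $\supp q_0\subset B(0,R_0)$. Chaining \eqref{poisson-3-1}, \eqref{poisson-3-2} and \eqref{stokes-3},
\[
\|\bm{u}^{(n+1)}(t)\|_{\bm{H}^3(\Omega)}\le C\bigl(\|\nabla\psi^{(n+1)}(t)\|_{L^2}+\|D^2\psi^{(n+1)}(t)\|_{L^2}\bigr)\le C\bigl(R_{n+1}(t)+1\bigr)\bigl(\|q_0\|_{L^2(\Omega)}+\|q_0\|_{L^1(\Omega)}\bigr).
\]
Applying the same chain at $t=0$ (where, by the Helmholtz decomposition, $\nabla^\perp\psi_0$ is the solenoidal part of $\bm{u}_0-\Delta\bm{u}_0$, $\Delta\psi_0=q_0$) shows $\|\bm{u}^{(0)}\|_{\bm{H}^3}=\|\bm{u}_0\|_{\bm{H}^3}\le C(R_0+1)(\|q_0\|_{L^2}+\|q_0\|_{L^1})$, so an induction with the comparison function $\Phi(t)=C(R_0+1)(\|q_0\|_{L^2}+\|q_0\|_{L^1})\,e^{C(\|q_0\|_{L^2}+\|q_0\|_{L^1})t}$, which solves the integral identity $\Phi=\Phi(0)+C(\|q_0\|_{L^2}+\|q_0\|_{L^1})\int_0^t\Phi$, yields $\sup_n\|\bm{u}^{(n)}\|_{L^\infty(0,T;\bm{H}^3(\Omega))}\le\|\Phi\|_{L^\infty(0,T)}=:M<\infty$ for \emph{every} $T>0$. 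This is where globalness comes from: the vorticity support grows at most linearly, and Gr\"onwall compensates. In particular every $q^{(n)}(t)$, $t\in[0,T]$, is supported in a fixed ball $B(0,R_*)$, $R_*=R_0+CTM$.

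\textbf{Contraction and conclusion.} Work on $\mathcal{K}=\{\bm{u}\in C([0,T_0];\bm{H}^2(\Omega)\cap V):\|\bm{u}\|_{L^\infty(0,T_0;\bm{H}^3)}\le M\}$, closed in $\mathcal{X}=C([0,T_0];\bm{H}^2(\Omega)\cap V)$. For inputs $\bm{u}^{(1)},\bm{u}^{(2)}\in\mathcal{K}$ put $\bm{a}=\bm{u}^{(1)}-\bm{u}^{(2)}$ and let $\tilde r,\tilde\sigma,\bm{w}$ be the differences of the associated $q$, $\psi$ and output velocity. Then $\tilde r$ solves $\partial_t\tilde r+\bm{u}^{(1)}\cdot\nabla\tilde r=-\operatorname{div}(\bm{a}\,q^{(2)})$, $\tilde r(0)=0$; pairing with $(-\Delta)^{-1}\tilde r$ and using $\|\bm{u}^{(1)}\|_{W^{1,\infty}}\le CM$, $\|\bm{a}\|_{L^\infty(\Omega)}\le C\|\bm{a}\|_{\bm{H}^2}$ (2D Sobolev) and $\|q^{(2)}\|_{L^2}\le\|q_0\|_{L^2}$, Gr\"onwall gives $\|\tilde r(t)\|_{H^{-1}(\Omega)}\le C(M,T,\|q_0\|)\int_0^t\|\bm{a}(s)\|_{\bm{H}^2}\dif s$. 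Since transport preserves the integral, $\int_\Omega\tilde r\,\dif x=0$, and since $\tilde r(t)$ is supported in $B(0,R_*)$, arguing as in the proof of Lemma \ref{lemma-poisson} (equivalently, via a Bogovski\u\i\ right inverse of the divergence) gives $\|\nabla\tilde\sigma(t)\|_{L^2(\Omega)}\le C(R_*)\|\tilde r(t)\|_{H^{-1}(\Omega)}$; the $L^2$-theory for the Stokes operator in the exterior domain (Galdi \cite{galdi2011introduction}) then yields $\|\bm{w}(t)\|_{\bm{H}^2(\Omega)}\le C\|\nabla^\perp\tilde\sigma(t)\|_{L^2(\Omega)}$. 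Hence $\sup_{[0,T_0]}\|\bm{w}\|_{\bm{H}^2}\le C(M,R_*,T,\|q_0\|)\,T_0\sup_{[0,T_0]}\|\bm{a}\|_{\bm{H}^2}$, a strict contraction for $T_0$ small, and the map sends $\mathcal{K}$ into itself by the uniform bound; Banach's theorem gives a unique fixed point $(\bm{u},\psi,q)$ on $[0,T_0]$. Because $M$ and $R_*$ hold on all of $[0,T]$, $T_0$ is independent of the base time, so one repeats the argument on $[T_0,2T_0],\dots$ up to $T$. The limit inherits the uniform $\bm{H}^3$ bound (weak-$*$), giving $\bm{u}\in L^\infty(0,T;\bm{H}^3(\Omega))$, and $\bm{u}\in C([0,T];V)$ follows from $q\in C([0,T];L^2)$ and continuity of the Poisson and Stokes solution maps. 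To read off \eqref{energy-formula}, use the identity $\bm{u}\cdot\nabla\bm{v}+(\nabla\bm{u})^t\bm{v}=q\,\bm{u}^\perp+\nabla(\bm{u}\cdot\bm{v})$ for divergence-free $\bm{u}$ with $q=\nabla^\perp\cdot\bm{v}$: since $\bm{v}=\bm{u}-\Delta\bm{u}$ satisfies $\nabla^\perp\cdot\bm{v}=\Delta\psi=q$ and $\partial_t q+\bm{u}\cdot\nabla q=0$, the field $\partial_t\bm{v}+q\bm{u}^\perp$ is curl free, hence (Remark \ref{remark-harmonic}) equals $-\nabla\pi$; setting $p=\pi-\bm{u}\cdot\bm{v}$ recovers $(\ref{euler-alpha-1r})$, and pairing with $\phi\in C^\infty([0,T];\mathcal{D})$ (the pressure term drops, $(\bm{v},\phi)_{L^2}=(\bm{u},\phi)_{L^2}+(\nabla\bm{u},\nabla\phi)_{L^2}$ since $\phi$ is compactly supported and divergence free) and integrating in time gives \eqref{energy-formula}. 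The initial condition $\bm{u}(0)=\bm{u}_0$ holds because $\bm{u}_0$ solves the same Stokes problem as $\bm{u}(0)$ (Helmholtz decomposition of $\bm{u}_0-\Delta\bm{u}_0$ and uniqueness in Lemma \ref{lemma-stokes}), and uniqueness of the weak solution follows from the same difference estimates as in the contraction step.

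\textbf{Main obstacle.} The delicate point is the contraction estimate. Since $\bm{u}_0\in\bm{H}^3$ yields only $q_0=\nabla^\perp\cdot(\bm{u}_0-\Delta\bm{u}_0)\in L^2$ with no spatial derivatives, the difference $\tilde r$ of two transported vorticities cannot be controlled in $L^2$ (that would require $\nabla q^{(2)}\in L^2$); one must estimate $\tilde r$ in $H^{-1}$ and then recover two full derivatives through the Poisson solve, which succeeds only thanks to the zero-mean and compact-support properties of $\tilde r$. Keeping all a priori bounds global in time via the Gr\"onwall compensation of the linear vorticity-support growth, and combining it with this $H^{-1}$ difference scheme, is the technical heart of the argument.
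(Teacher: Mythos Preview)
Your overall architecture---the transport/Poisson/Stokes iteration, the uniform $\bm{H}^3$ bound obtained by combining the linear growth of $\delta(\supp q)$ with Gr\"onwall, and the Banach fixed point conclusion followed by concatenation of subintervals---matches the paper's proof (its Steps~1, 2, 4, 5). The genuine difference is in the contraction step.

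You estimate the difference at the \emph{vorticity} level: control $\tilde r$ in $H^{-1}$ (equivalently, control $\|\nabla\tilde\sigma\|_{L^2}$ directly from the transported difference equation), then lift through the Poisson and Stokes solves to obtain an $\bm{H}^2$ contraction on the output velocities. This is correct---once one identifies $\tilde\sigma$ with (the negative of) your $(-\Delta)^{-1}\tilde r$, the pairing argument gives $\tfrac{d}{dt}\|\nabla\tilde\sigma\|_{L^2}^2\le C\|\nabla\bm{u}^{(1)}\|_{L^\infty}\|\nabla\tilde\sigma\|_{L^2}^2+C\|\bm{a}\|_{L^\infty}\|q_0\|_{L^2}\|\nabla\tilde\sigma\|_{L^2}$, and the zero-mean/Bogovski\u\i\ remark, while not strictly needed for this, does no harm. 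The paper instead avoids the vorticity altogether: it first observes (its Step~3) that each input/output pair $(\bm{u},\tilde{\bm{u}})$ satisfies a \emph{linearized Euler--$\alpha$ equation at the velocity level}, namely $\partial_t\tilde{\bm{v}}+\bm{u}\cdot\nabla\tilde{\bm{v}}-(\nabla\tilde{\bm{v}})^t\bm{u}+\nabla\tilde p=0$ with $\tilde{\bm{v}}=\tilde{\bm{u}}-\Delta\tilde{\bm{u}}$, and then performs a direct $\bm{H}^1$ energy estimate on the difference $S=\tilde{\bm{u}}^1-\tilde{\bm{u}}^2$ by testing with $S$ and integrating by parts. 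This needs only the already-established $\bm{H}^3$ bounds on the iterates and elementary H\"older/Gagliardo--Nirenberg inequalities; no negative-index spaces, no separate Stokes $L^2\!\to\!\bm{H}^2$ estimate, and no tracking of $\supp\tilde r$ are required. What your route buys is a closer kinship with Yudovich-type arguments for 2D Euler (and it would generalize more readily if one wanted to weaken the vorticity regularity); what the paper's route buys is a shorter, self-contained contraction using only the lemmas already stated in Section~\ref{notation}.
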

\begin{proof} Let $R_0 > 0$ such that $\supp \, q_0 \subset B(0, R_0)$. Let $T_0 > 0$ be determined later, we  will construct a map $\mathcal{F}$ from $C([0, T_0]; V)$ to itself, and then we will show $\mathcal{F}$ is a contraction map for a short time interval such that we  can use Banach fixed point theorem to prove the local well-posedness of equations $(\ref{euler-alpha-1r})$--$(\ref{euler-alpha-5r})$, finally we will extend the solution to any time interval.

$\bm{Step \ 1}$  Construct the map $\mathcal{F}:C([0, T_0]; V) \rightarrow C(0, T_0]; V)$. The domain of $\mathcal{F}$ is defined as $D(\mathcal{F}) := \{ \bm{u} \in C([0, T_0]; V \cap \bm{H}^3(\Omega) \ \big  | \ \sup_{t\in[0, T_0]}\|\bm{u}(t)\|_{\bm{H}^3} \leqslant M, \bm{u}|_{t=0} = \bm{u}_0\}$, where $M > 0$ is determined later. Let $\bm{u}\in D(\mathcal{F})$, we can find that there exists a unique triple $(q, \psi, \tilde{\bm{u}})$ satisfies the following equations in suitable Sobolev space
\begin{numcases}{}
     \partial_t q + \bm{u} \cdot \nabla{q} = 0 & $ \text{in } \,\Omega \times [0, T_0]$ , \label{construction-equation-1} \\
     q|_{t=0} = q_0 & $ \text{in } \,  \Omega$,  \label{construction-equation-2}\\
     \Delta_x {\psi}(x, t) = {q}(x, t) & $ \text{in } \,\Omega \times [0, T_0]$ \label{pf-poisson-psi},  \label{construction-equation-3}\\
     {\psi}(x, t) = 0 & $ \text{on } \, \Gamma \times [0, T_0]$ \label{pf-poisson-psi-2},  \label{construction-equation-4}\\
      {\tilde{\bm{u}}}(x, t) + A\tilde{\bm{u}}(x, t) = \nabla^{\perp}{\psi}(x, t) &  $\text{in } \Omega \times [0, T_0]$,   \label{construction-equation-5}\\
       \tilde{\bm{u}}(x, t) = 0 & $\text{on } \Gamma \times [0, T_0]$.   \label{construction-equation-6}
\end{numcases}
Indeed, from Lemma \ref{lemma-transport}, we know that the transport equations (\ref{construction-equation-1})--(\ref{construction-equation-2}) admits a unique weak solution ${q} \in C([0, T_0]; L^2(\Omega) \cap L^1(\Omega))$ with the following estimate
\begin{equation}\label{pf-transport}
\sup_{t\in [0, T_0]}\| ({q(t)} \|_{L^2(\Omega)} + \| {q(t)}\|_{L^1(\Omega)}) \leqslant (\| q_0 \|_{L^2(\Omega)} + \|q_0\|_{L^1(\Omega)}).
\end{equation}
Moreover, concerning Remark \ref{remark-transport-equation}, we observe that the diameter of the support of $q(t)$ obeys
\begin{equation}\label{pf-transport-1}
\delta(\mathrm{supp} \,q(t))< R_0 + C\int_0^t \|\bm{u}(\cdot, s)\|_{\bm{H}^2(\Omega)}\dif s,
\end{equation}
 for $t \in [0, T_0]$. In the light of Lemma \ref{lemma-poisson}, we know that the Poisson equation (\ref{construction-equation-3})$-$(\ref{construction-equation-4})
has a unique solution ${\psi} \in C\left([0, T_0]; \dot{H}\right)$ such that $\nabla {\psi}$ belongs to $C([0, T_0]; \bm{H}^1(\Omega))$ and satisfies the following property
\begin{equation}\label{pf-poisson}
\| \nabla {\psi}(t) \|_{\bm{H}^1(\Omega)} \leqslant C\delta(\supp\, q(t)) (\|q(t)\|_{L^2(\Omega)} + \| {q}(t) \|_{L^1(\Omega)}), \ \forall t \in [0, T_0].
\end{equation}
Furthermore, By Lemma \ref{lemma-stokes}, we observe that the Stokes equation (\ref{construction-equation-5})$-$(\ref{construction-equation-6}) has a unique solution $\bm{\tilde{u}} \in C([0, T_0]; \bm{H}^3(\Omega) \cap V)$ such that for all $t \in [0, T_0]$, the following estimate holds
\begin{equation}\label{pf-stokes}
\begin{aligned}
\|\bm{\tilde{u}}(t)\|_{\bm{H}^3(\Omega)} &\leqslant C \|\nabla^\bot {\psi}(t)\|_{\bm{H}^1(\Omega)}.
\end{aligned}
\end{equation}
The contraction map is then defined by $\cal{F}[\bm{u}] := \bm{\tilde{u}}$.

$\bm{Step \ 2}$  We shall now determine the parameters $T_0$ and $M$ to ensure that $\bm{\tilde{u}} \in D(\mathcal{F})$. Collecting $(\ref{pf-transport-1})$, $(\ref{pf-poisson})$ and $(\ref{pf-stokes})$, it follows that:
\begin{align}\label{pf-estimate-tilde-u}
\|\bm{\tilde{u}}(t)\|_{\bm{H}^3(\Omega)} &\leqslant C(R_0 + \int_0^t \|\bm{u}(\cdot, s)\|_{\bm{H}^2(\Omega)} \dif s) (\|q_0\|_{L^1(\Omega)} + \|q_0\|_{L^2(\Omega)})
\end{align}
for $t \in [0, T_0]$. Setting $M := 2 \max\left(C(\| q_0 \|_{L^2(\Omega)} + \|q_0\|_{L^1(\Omega)}), \|\bm{u}_0\|_{\bm{H}^3(\Omega)}\right)$ and $T_0 := \frac{R_0}{M}$, taking account of ($\ref{pf-estimate-tilde-u}$) it follows that
\begin{equation}\label{pf-tilde-u-estimate}
 \sup_{t\in[0, T_0]}\|\bm{\tilde{u}}(t)\|_{\bm{H}^3(\Omega)} \leqslant M.
\end{equation}
To ensure that $\bm{\tilde{u}} \in D(\cal{F})$, it remains to check that $\bm{\tilde{u}}|_{t=0} = \bm{u}_0$. From equations (\ref{construction-equation-1})$-$(\ref{construction-equation-6}), we deduce that
\begin{equation*}
\begin{aligned}
\nabla ^\perp \cdot (\bm{\tilde{u}} + A\bm{\tilde{u}})|_{t=0} = \Delta {\psi}|_{t=0} = q_0 = \nabla ^\perp\cdot(\bm{u}_0 + A\bm{u}_0),
\end{aligned}
\end{equation*}
  Since both $\bm{u}$ and $\bm{\tilde{u}}$ are  divergence-free vectors, we at once obtain
\begin{equation*}
\begin{aligned}
\nabla \cdot (\bm{\tilde{u}} + A\bm{\tilde{u}})|_{t=0} = \nabla \cdot (\bm{u}_0 + A\bm{u}_0) = 0.
\end{aligned}
\end{equation*}
Collecting the above two equalities, in view of Lemma $\ref{lemma-harmonic}$, we thus have
\begin{equation*}
\begin{aligned}
(\bm{\tilde{u}}|_{t=0} -\bm{u}_0) + A(\bm{\tilde{u}}|_{t=0} - \bm{u}_0) = 0,
\end{aligned}
\end{equation*}
which, in turn, secures $\bm{\tilde{u}}|_{t=0} = \bm{u}_0$ by Lemma \ref{lemma-stokes}.

$\bm{Step \ 3}$  We have to prove that the map $\mathcal{F}$ is a contraction. First of all, we assert that ($\bm{u}$, $\bm{\tilde{u}}$) satisfies the following equations
\begin{numcases}{}
    \partial_t \bm{\tilde{v}} + \bm{u} \cdot {\nabla} \bm{\tilde{v}} - (\nabla \bm{\tilde{v}})^t \bm{u}  + \nabla \tilde{p} = 0  &  $\text{in} \ \Omega \times (0, T_0] $ \label{pf-euler-alpha-1}\\
    \text{div} \ \bm{\tilde{u}} = 0, &  $\text{in} \ \Omega \times [0, T_0] $ \label{pf-euler-alpha-2}\\
    \bm{\tilde{u}} = 0,  & $\text{in} \ \Gamma \times [0, T_0] \label{pf-euler-alpha-3}$\\
    \bm{\tilde{u}}|_{t=0} = \bm{u}_0, & $\text{in} \ \Omega $\label{pf-euler-alpha-4}\\
    \bm{\tilde{u}} (x, t) \to 0, & $\forall t \in [0, T_0], |x| \to \infty $ \label{pf-euler-alpha-5}
\end{numcases}
where $\bm{\tilde{v}} = \bm{\tilde{u}} - \Delta \bm{\tilde{u}}$. We only need to check ($\ref{pf-euler-alpha-1}$). To end this, let
\begin{equation*}
\begin{aligned}
\nonumber \bm{Q}(t) :=  \bm{\tilde{v}}(t) -   \bm{\tilde{v}}(0) + \int_0^t \left[\bm{u}(s) \cdot {\nabla} \bm{\tilde{v}}(s)  - (\nabla \bm{\tilde{v}})^t(s) \bm{u}(s) \right] \dif s \ \ \forall t \in [0, T_0],
\end{aligned}{}
\end{equation*}
 Observe that $\bm{u}, \bm{\tilde{u}} \in C([0, T_0]; \bm{H}^3(\Omega)\cap V)$, it follows $\bm{Q}(t) \in C([0, T_0]; (L^2(\Omega))^2)$. Taking account of \eqref{construction-equation-1}, we also know that
\begin{equation*}
\begin{aligned}
\nonumber \nabla_x^\perp\cdot \bm{Q}(x, t) &=\int_0^t \left[ \partial_t {q}(x, s) + \bm{u} \cdot \nabla_x{{q}(x, s)}\right] \dif s \\
&= 0.
\end{aligned}
\end{equation*}
From Lemma \ref{lemma-harmonic}, we deduce that there exists $\nabla \tilde{P} \in C([0, T_0];(L^2(\Omega))^2)$ such that
\begin{equation*}
\begin{aligned}
 \nonumber \bm{Q}(t) = \nabla \tilde{P}(t)
\end{aligned}
\end{equation*}
for $t \in [0, T_0]$. Differentiate the above equality with respect to t and denote $\partial_t \tilde{P}(t)$ by $\tilde{p}(t)$, we then arrive at ($\ref{pf-euler-alpha-1}$) in distribution sense.
We now focus on the energy estimates for these equations \eqref{pf-euler-alpha-1}--\eqref{pf-euler-alpha-5} to show $\cal{F}$ is a contraction. Assume that $\bm{u^1}, \bm{u^2} \in C([0, T_0]; V\cap \bm{H}^3(\Omega))$, we set
\begin{equation*}
\begin{aligned}
\bm{\tilde{u}}^1 = \cal{F}[{\bm{u^1}}],\\
 \bm{\tilde{u}}^2 = \cal{F}[\bm{u}^2],\\
 \bm{\tilde{v}}^1 = \bm{\tilde{v}}^1 -\Delta\bm{\tilde{u}}^1\\ \bm{\tilde{v}}^2 = \bm{\tilde{u}}^1 - \Delta\bm{\tilde{u}}^2,\\
W = \bm{u^1} - \bm{u^2},\\
S = \bm{\tilde{u}}^1 - \bm{\tilde{u}}^2,
\end{aligned}
\end{equation*}
then $\bm{\tilde{v}}^1 - \bm{\tilde{v}}^2 = S - \Delta S$. Subtracting  the equation ($\ref{pf-euler-alpha-1})$ for $\bm{\tilde{u}}^2$ from the one for $\bm{\tilde{u}}^1$, it follows
\begin{equation*}
\begin{aligned}
\partial_t(S-\Delta S) + \bm{u^1} \cdot \nabla \bm{\tilde{v}}^1 - \bm{u^2} \cdot \nabla \bm{\tilde{v}}^2 + \sum_{j=1}^2 \bm{u^1}_j \cdot \nabla \bm{\tilde{v}}^1_j - \sum_{j=1}^2 \bm{u^2}_j \cdot \nabla \bm{\tilde{v}}^2_j + \nabla \tilde{p}^1 - \nabla \tilde{p}^2 = 0.
\end{aligned}
\end{equation*}
Multiplying the above equation by $S$ and integrating over $\Omega \times [0, t)$ for any $t \in (0, T_0]$, we obtain, after integrate by parts,
\begin{equation}\label{pf-K}
\begin{aligned}
\frac{1}{2}\left(\|S(t)\|_{L^2}^2 + \|\nabla S(t)\|_{L^2}^2\right) &= \int_0^t\int_{\Omega} S \cdot \left[ (W\cdot \nabla) \bm{\tilde{v}}^1 + (\bm{u^2}\cdot \nabla) (S - \Delta S)\right] \dif x \dif s\\
&\quad+\int_0^t\int_{\Omega} S \cdot \left[ \sum_{j=1}^2 \bm{u}_j^1\nabla (S_j - \Delta S_j) + \sum_{j=1}^2 W_j \nabla \bm{\tilde{v}}_j^2\right] \dif x \dif s\\
&=:K_1 + K_2,
\end{aligned}
\end{equation}
where we have used the property that $\tilde{\bm{u}}^2|_{t = 0} = \tilde{\bm{u}}^1_{t = 0} = \bm{u}_0$.  As usual, noting that $(S, \bm{u^2} \cdot \nabla S) = 0$ and integrating by parts, we deduce:
\begin{align*}
\nonumber K_1 &= \int_0^t\int_{\Omega}S \cdot \left[ (W\cdot \nabla) \bm{\tilde{v}}^1 + (\bm{u^2}\cdot \nabla) (S - \Delta S)\right] \dif x \dif s\\
\nonumber &= \int_0^t\int_{\Omega}S \cdot \left[ (W\cdot \nabla) \bm{\tilde{v}}^1\right] \dif x \dif s -\int_0^t\int_{\Omega}S \cdot \left[ (\bm{u^2}\cdot \nabla) (\Delta S)\right] \dif x \dif s \\
\nonumber &= \int_0^t\int_{\Omega}S \cdot \left[ (W\cdot \nabla) \bm{\tilde{v}}^1\right] \dif x \dif s +\int_0^t\int_{\Omega}\Delta S \cdot \left[ (\bm{u^2}\cdot \nabla) S\right] \dif x \dif s\\
\nonumber&=  \int_0^t\int_{\Omega}S \cdot \left[ (W\cdot \nabla) \bm{\tilde{v}}^1 \right] \dif x \dif s  - \int_0^t\int_{\Omega}\sum_{k, l} \left[ \partial_l u_k^2 \partial_k S + u_k^2\partial_k\partial_l S\right] \partial_l S \dif x \dif s\\
\nonumber&= \int_0^t\int_{\Omega}S \cdot \left[ (W\cdot \nabla) \bm{\tilde{v}}^1 \right] \dif x \dif s  - \int_0^t\int_{\Omega}\sum_{k, l} \left[ \partial_l u_k^2 \partial_k S\right] \partial_l S \dif x \dif s,
\end{align*}
  then taking account of ($\ref{pf-tilde-u-estimate}$) and using H\"older inqueality and Gagliardo-Nirenberg inequality, it follows
\begin{align}
\nonumber |K_1| &\leqslant \int_0^t \left[\|S\|_{L^4} \|W\|_{L^4} \|\nabla \bm{\tilde{v}}^1\|_{L^2} +  \|\nabla \bm{u^2}\|_{L^\infty}\|\nabla S\|_{L^2}^2\right] \dif s\\
%\nonumber&\leqslant C (\|S\|_{L^2}^{\frac{1}{2}}\|\nabla S\|_{L^2}^{\frac{1}{2}}\|W\|_{L^2}^{\frac{1}{2}}\|\nabla W\|_{L^2}^{\frac{1}{2}} \|\bm{\tilde{v}}^1\|_{\bm{H}^1(\Omega)} + \|\bm{u^2}\|_{\bm{H}^3}\|S\|_{\bm{H}^1(\Omega)})\\
%\nonumber&\leqslant C(\|\bm{\tilde{v}}^1\|_{\bm{H}^1(\Omega)} \|S\|_{\bm{H}^1(\Omega)}\|W\|_{\bm{H}^1(\Omega)} + \|\bm{u^2}\|_{\bm{H}^3}\|S\|_{\bm{H}^1(\Omega)}^2)\\
&\leqslant CM \int_0^t\left[\|S\|_{\bm{H}^1(\Omega)}^2 + \|W\|_{\bm{H}^1(\Omega)}^2\right] \dif s. \label{pf-K_1}
\end{align}
 We now estimate the second term $K_2$, which is similar to $K_1$. Indeed, by integration by parts, $K_2$ can be rewrited as:
\begin{align*}
\nonumber K_2 &= \int_0^t\int_{\Omega} S \cdot \left[ \bm{u}_j^1\nabla (S_j -  \Delta S_j) +  W_j \nabla \bm{\tilde{v}}_j^2\right] \dif x \dif s \\
\nonumber &= \int_0^t\int_{\Omega} \left[S_i \bm{u}_j^1\partial_i S_j -  S_i \bm{u}_j^1\partial_i(\Delta S_j) +  S_i W_j \partial_i \bm{\tilde{v}}_j^2 \right]\dif x \dif s\\
\nonumber &= \int_0^t\int_{\Omega} \left[S_i \bm{u}_j^1\partial_i S_j  +  S_i \partial_i \bm{u}_j^1 \Delta S_j +  S_i W_j \partial_i \bm{\tilde{v}}_j^2 \right] \dif x \dif s \\
\nonumber &= \int_0^t\int_{\Omega} \left[S_i\bm{u}_j^1 \partial_i S_j - \partial_k \partial_l \bm{u^1}_jS_l\partial_k S_j
- \partial_l \bm{u^1}_j\partial_kS _l\partial_k S_j +  \partial_i \bm{\tilde{v}}^2_j S_i W_j\right] \dif x \dif s,
\end{align*}
again  using inequality \eqref{pf-tilde-u-estimate}, H\"older inqueality and Gagliardo-Nirenberg inequality, it is easy to deduce that
\begin{align}
\nonumber |K_2| &\leqslant \int_0^t \left[\|S\|_{L^2}\|\bm{u^1}\|_{L^\infty}\|\nabla S\|_{L^2} + \|\nabla^2\bm{u^1}\|_{L^4}\|S\|_{L^4}\|\nabla S\|_{L^2} \right] \dif s \\
\nonumber &\quad + \int_0^t \left[\|\nabla \bm{u^1}\|_{L^\infty}\|\nabla S\|_{L^2}^2  + \|S\|_{L^4}\|W\|_{L^4}\|\nabla \bm{\tilde{v}}^2\|_{L^2}\right] \dif s\\
&\leqslant CM \int_0^t \left[\|S\|_{\bm{H}^1(\Omega)}^2 + \|W\|_{\bm{H}^1(\Omega)}^2\right] \dif s. \label{pf-K_2}
\end{align}
In view of (\ref{pf-K}), (\ref{pf-K_1}) and (\ref{pf-K_2}), it shows that for all $t \in [0, T_0]$
\begin{equation*}
\begin{aligned}
\frac{1}{2} \|S(t)\|_{\bm{H}^1(\Omega)}^2 \leqslant CM \int_0^t \left[\|S\|_{\bm{H}^1(\Omega)}^2 + \|W\|_{\bm{H}^1(\Omega)}^2\right] \dif s
\end{aligned}
\end{equation*}
where $M$ is defined in step 2 and $C$ only depends on $\Omega$. Thanks to Gr\"onwall's inequality, we have
\begin{equation*}
\begin{aligned}
\|S\|_{\bm{H}^1(\Omega)}^2(t) \leqslant CM\int_0^t\|W\|_{\bm{H}^1(\Omega)}^2(s) \exp\{CM(t-s) \}\dif s.
\end{aligned}
\end{equation*}
For arbitrary $h \in (0, T_0]$, the above inequality implies
\begin{equation}
\begin{aligned}\label{pf-estimate-s}
\sup_{t\in[0, h]} \|S\|_{\bm{H}^1(\Omega)}^2(t) \leqslant ({e^{CM T} - 1})\sup_{t\in[0, h]}\|W\|_{\bm{H}^1(\Omega)}^2(t).
\end{aligned}
\end{equation}
Consequently, if choose $h \in (0, T_0] $ small enough, so that $e^{CM T} - 1 < 1$, then we have that $\cal{F}$ is a contraction mapping with respect to the $\bm{H}^1(\Omega)$ norm, for short time interval $[0, h]$.

$\bm{Step \ 4}$  Local existence. By Banach Fixed Point Theorem , we could conclude there exists a unique fixed point $\bm{u}\in C([0, h]; V)$. This fixed point is also the limit of the fixed point iteration, with $\bm{u}^0 := \bm{u}_0$ and $\bm{u}^n := \cal{F}[\bm{u}^{n-1}]$. From $(\ref{pf-tilde-u-estimate})$, we know that $\bm{u}^n$ is uniformly bounded in $\bm{H}^3(\Omega)$, thanks to  Banach-Alaoglu theorem we have that there exists a subsequence $\bm{u}^{n_k}$ converges, weak-star to $\bm{u}$ in  $L^\infty([0, h]; \bm{H}^3(\Omega))$. It's easy to see that $\bm{u} \in C([0, T]; \bm{H}^3(\Omega))$. Indeed, since $\cal{F}[\bm{u}] \in C([0, T]; \bm{H}^3(\Omega)\cap V)$ and $\bm{u}$ is the fixed point, we have $\bm{u} := \cal{F}[\bm{u}] \in C([0, T]; \bm{H}^3(\Omega)\cap V)$. %Suppose $\delta < T_0$, then let $\bar{\bm{u}}_0 := \bm{u}|_{t=\delta}$ and repeat the above steps with $\bm{u}_0$ replaced by $\bar{\bm{u}}_0$, we could obtain the same estimates $(\ref{pf-tilde-u-estimate})$, for which we get the same inequality $(\ref{pf-estimate-s})$  to extend the solution to time interval $[0, 2\delta]$. Repeating this process, we could extend to $[0, T_0]$.

$\bm{Step \ 5}$  Extending the solution to any time interval. Since $\bm{u}$ is the limit of $\bm{u}^{n_k}$, it follows from $(\ref{pf-estimate-tilde-u})$ that
\begin{align*}
\nonumber \|\bm{u}(\cdot, t)\|_{\bm{H}^3} &\leqslant C(R_0 + \int_0^t\|\bm{u}(\cdot, s)\|_{\bm{H}^2(\Omega)}\dif s) (\|q_0\|_{L^1} + \|q_0\|_{L^2}) \\
&\leqslant C(R_0 + \int_0^t\|\bm{u}(\cdot, s)\|_{\bm{H}^3(\Omega)}\dif s)
\end{align*}
for $t \in [0, h]$ and $C$ is independent of time $t$. Using Gr\"onwall's inequality, we obtain:
\begin{align*}
\|\bm{u}\|_{L^\infty([0, h]; \bm{H}^3(\Omega))} \leqslant CR_0 \exp (Ch),
\end{align*}
 which implies the solution can be extended to any time interval.
\end{proof}

\begin{remark} By combing $\eqref{high-transport-estimate}$ and the arguments in the proof of Proposition $\ref{proposition-1}$,  one is able to obtain the following estimate provided that  $\bm{u}_0 \in H^s(\Omega)$ for $s > 3$,
\begin{align}
\|\bm{u}\|_{L^\infty([0, T];\bm{H}^s(\Omega))} \leq C\|\bm{u}_0\|_{\bm{H}^s(\Omega)}
\end{align}
where C depends on T and $\bm{H}^3(\Omega)$-norm of $\bm{u}_0$.
\end{remark}
With the above technical lemmas and proposition, we finally can obtain the main result in this paper.
\begin{main}\label{main-theorem}
Assume that the initial \textit{filtered} velocity $\bm{u}_0 \in \bm{H}^s(\Omega) \cap V$($s \geq 3$), then for arbitrary $T>0$, the equations $(\ref{euler-alpha-1r})$--$(\ref{euler-alpha-5r})$ admit a unique weak solution $\bm{u} \in L^\infty(0, T; \bm{H}^s(\Omega)) \cap C([0, T]; V)$ in the following sense, for any $\phi \in C^\infty([0, T]; \mathcal{D})$, the indentity
\begin{equation}\label{energy-formula-main-theorem}
\begin{aligned}
 (\bm{u}(t), \phi)_{L^2(\Omega)} + (\nabla \bm{u}(t), \nabla \phi) - (\bm{u}_0, \phi) - (\nabla \bm{u}_0, \nabla \phi)_{L^2(\Omega)} \\
= \int_0^t(\bm{v}, \phi_t)_{L^2(\Omega)} - \int_0^t (\bm{u}\cdot \nabla \bm{v} + (\nabla \bm{u})^t \cdot \bm{v}, \phi)_{L^2(\Omega)}
\end{aligned}
\end{equation}
holds for $t\in[0, T]$, where $\bm{v} = \bm{u} - \Delta\bm{u}$. Moreover,
\begin{align*}
\|\bm{u}(t)\|_{H^s(\Omega)} \leqslant C \|\bm{u}_0\|_{H^s(\Omega)},
\end{align*}
where C depends on $\Omega$, $T$ and $\|\bm{u}_0\|_{\bm{H}^3(\Omega)}$ for $s > 3$, while depends only on $\Omega$  for $s = 3$.
\end{main}
\begin{proof}
 Let $T > 0$ be arbitrary. Assume that $\bm{u}_0 \in V \cap \bm{H}^s(\Omega)$, $s \geq 3$ and choose $\lbrace \bm{u}^n_0\rbrace$ be the approximate sequence constructed in Lemma \ref{approximate-lemma}. For any fixed $n \in \mathbb{N}$, we know from Proposition \ref{proposition-1} that there exists a unique solution $\bm{u}^n$ to equations \eqref{euler-alpha-1r}-\eqref{euler-alpha-5r} with initial value $\bm{u}^n_0$.

  It is crucial to prove that the limit of $\bm{u}^n$ in the suitable space is  the solution $\bm{u}$ of the formula \eqref{energy-formula} with the initial data $\bm{u}_0$. Fortunately, we observe that it is able to obtain the uniform bound  for the lower order partial derivatives of $\bm{u}^n$ from equation ($\ref{euler-alpha-1r}$) and the one for the high order derivatives from equations \eqref{construction-equation-1}--\eqref{construction-equation-6}, respectively.

 Beginning to the estimates of low order derivatives, it is easy to obtain this bound from energy estimates for the equation \eqref{euler-alpha-1r} as follows,
\begin{equation*}
\begin{aligned}
\frac{1}{2}\frac{\dif}{\dif t} (\|\bm{u}^n\|_{L^2(\Omega)}^2 + \|\nabla \bm{u}^n\|_{L^2(\Omega)}^2) &=
\int_{\Omega} \bm{u}^n_j \partial_j(\bm{u}^n_i -  \Delta \bm{u}^n_i) \bm{u}^n_i + (\bm{u}^n_j -  \Delta \bm{u}^n_j) \partial_i \bm{u}^n_j \bm{u}^n_i\\
&=\int_{\Omega} \bm{u}^n_j \partial_j(\frac{1}{2}\bm{u}^n_i)^2  -  \int_{\Omega}  \bm{u}^n_j  \partial_j \Delta \bm{u}^n_i \bm{u}^n_i \\
&\quad+ \int_{\Omega} (\bm{u}^n_j) \partial_i \bm{u}^n_j \bm{u}^n_i -  \int_{\Omega} \Delta \bm{u}^n_j \partial_i \bm{u}^n_j \bm{u}^n_i\\
&=-  \int_{\Omega}  \bm{u}^n_j  \partial_j \Delta \bm{u}^n_i \bm{u}^n_i + \Delta \bm{u}^n_j \partial_i \bm{u}^n_j \bm{u}^n_i\\
&=0,
\end{aligned}
\end{equation*}
where we used the condition $\nabla \cdot \bm{u}^n \equiv 0$. It follows that
\begin{equation}\label{pf-low-esimtate-2}
\begin{aligned}
\sup_{t \in [0, \infty)} \|\bm{u}^n(\cdot, t)\|_{\bm{H}^1(\Omega)} \leqslant  \|\bm{u}^n_0\|_{\bm{H}^1(\Omega)} \leqslant C \|\bm{u}_0\|_{\bm{H}^1(\Omega)}.
\end{aligned}
\end{equation}

It is subtle to obtain the uniform estimates of the high order derivatives. As in the proof in Proposition \ref{proposition-1}, the solution $\bm{u}^n$ satisfies $\mathcal{F}[\bm{u}^n] = \bm{u}^n$, by $(\ref{construction-equation-1})$--$(\ref{construction-equation-6})$ we know that there exists the triple $(\bm{u}^n, q^n, \psi^n)$  obeys the following system
\begin{numcases}{}
    \nonumber \partial_t q^n + \bm{u}^n \cdot \nabla{q^n} = 0 & $ \text{in } \,\Omega \times [0, T]$ , \\
    \nonumber q^n|_{t=0} = q_0^n & $ \text{in } \,  \Omega$, \\
    \nonumber \Delta_x {\psi^n}(x, t) = {q^n}(x, t) & $ \text{in } \,\Omega \times [0, T]$ \\
    \nonumber {\psi^n}(x, t) = 0 & $ \text{on } \, \Gamma \times [0, T]$ \\
    \nonumber {{\bm{u}}}^n(x, t) -\Delta{\bm{u}}^n(x, t)+\nabla p^n = \nabla^{\perp}{\psi^n}(x, t) &  $\text{in } \Omega \times [0, T]$,   \\
    \nonumber {\bm{u}}^n(x, t) = 0 & $\text{on } \Gamma \times [0, T]$.
\end{numcases}
 Since $(\bm{u}^n, \nabla^\perp \psi^n)$ satisfies the stationary Stokes equations, with the aid of proposition 6 in \cite{giga2018handbook}, we obtain
\begin{equation*}
\|D^{k+3}\bm{u}^n(\cdot, t)\|_{L^2(\Omega)} \leqslant C(\|\nabla^\perp\psi^n(\cdot, t)\|_{H^{k+1}(\Omega)} + \|\bm{u}^n(\cdot, t)\|_{L^2(\Omega)})
\end{equation*}
for $t \in [0, T]$ and $k \geq 0$. Collecting the inequality ($\ref{esimate-du-m}$) and the above inequality, it follows
\begin{align*}
\|D^{k+3}\bm{u}^n(\cdot, t)\|_{L^2(\Omega)} \leqslant C(\|q^n(\cdot, t)\|_{H^k(\Omega)} + \|\nabla\psi^n\|_{L^2(\Omega))} + \|\bm{u}^n(\cdot, t)\|_{L^2(\Omega)}).
\end{align*}
Noting that  $\|\nabla^\perp\psi^n\|_{L^2(\Omega)} \leqslant \|\bm{u}^n\|_{H^2(\Omega)}$, by Sobolev interpolation inequality, we then arrive at
\begin{equation*}
\begin{aligned}
\|D^{k+3}\bm{u}^n(\cdot, t)\|_{L^2(\Omega)} &\leqslant C(\|q^n(\cdot, t)\|_{H^k(\Omega)} + \|\bm{u}^n(\cdot, t)\|_{L^2(\Omega)}),
\end{aligned}
\end{equation*}
for $t \in [0, T]$ and $k \geq 0$. Let's consider the special case that $k = 0$, By the above inequality, ($\ref{transport-3}$) and ($\ref{pf-low-esimtate-2}$), we can see that
\begin{equation*}
\begin{aligned}
\|\bm{u}^n\|_{L^\infty([0, T];\bm{H}^3(\Omega))} &\leqslant C(\|q^n_0\|_{L^2(\Omega)} + \|\bm{u}^n_0\|_{L^2(\Omega)})\leqslant C\|\bm{u}^n_0\|_{\bm{H}^3(\Omega)}\leqslant C\|\bm{u}_0\|_{\bm{H}^3(\Omega)}.
\end{aligned}
\end{equation*}
Taking into account the above two inequalities and (\ref{high-transport-estimate}), we finally deduce by induction that
\begin{equation*}
\begin{aligned}
\|\bm{u}^n\|_{L^\infty([0, T]; \bm{H}^s(\Omega))} &\leqslant C\|\bm{u}_0\|_{H^s(\Omega)},
\end{aligned}
\end{equation*}
where $C$ depends on $T$ and $\bm{H}^3(\Omega)$-norm of $\bm{u}_0$ for $s \geq 4$. By virtue of Banach-Alaoglu Theorem, we find that there exists a subsequence $\bm{u}^{n_k}$ converges weak-star to some $\bm{u}$ in $L^\infty\left([0, T]; \bm{H}^s(\Omega)\right)$.

Furthermore, we assert that $\bm{u}^n$ converges to $\bm{u}$ in $C([0, T]; \bm{H}^1(\Omega))$ strongly. Firstly, subtracting the equation ($\ref{euler-alpha-1r}$) for $\bm{u}^m$ from the one for $\bm{u}^n$, respectively, then multiplying by $(\bm{u}^n - \bm{u}^m)$ and integrating over $\Omega$, we have that
\begin{equation*}
\begin{aligned}
\frac{1}{2} \frac{\dif}{\dif t}\|\bm{u}^n - \bm{u}^m)\|^2_{\bm{H}^1(\Omega)}&= \int_{\Omega} [(\bm{u}^n - \bm{u}^m) \cdot \nabla \bm{v}^n] \cdot (\bm{u}^n - \bm{u}^m) \\
&\quad + \int_{\Omega} [\bm{u}^m \cdot \nabla (\bm{v}^n - \bm{v}^m)] \cdot (\bm{u}^n - \bm{u}^m)\\
&\quad + \int_{\Omega} [(\nabla \bm{u}^n - \nabla \bm{u}^m)^t \cdot \bm{v}^n] \cdot (\bm{u}^n - \bm{u}^m) \\
&\quad + \int_{\Omega} [(\nabla \bm{u}^m)^t \cdot (\bm{v}^n - \bm{v}^m)] \cdot (\bm{u}^n - \bm{u}^m)\\
&=: M_1 + M_2 + M_3 + M_4.
\end{aligned}
\end{equation*}
It is easy to observed that $M_1 + M_3 = 0$, clearly, integrating by parts, we find
\begin{equation*}
\begin{aligned}
\nonumber M_3 &= \int_{\Omega} [(\nabla \bm{u}^n - \nabla \bm{u}^m)^t \cdot \bm{v}^n] \cdot (\bm{u}^n - \bm{u}^m) \\
\nonumber &= \int_{\Omega} \partial_i (\bm{u}^n_j - \bm{u}^m_j) \bm{v}^n_j(\bm{u}^n_i - \bm{u}^m_i) \\
%\nonumber &= -\int_{\Omega} (\bm{u}^n_j - \bm{u}^m_j)\partial_i \bm{v}^n_j(\bm{u}^n_i - \bm{u}^m_i) \\
\nonumber &= -M_1.
\end{aligned}
\end{equation*}
 Using integration by parts, we obtain  more favourable formula for $M_2$
\begin{equation*}
\begin{aligned}
\nonumber M_2 & = \int_{\Omega} [\bm{u}^m \cdot \nabla (\bm{v}^n - \bm{v}^m)] \cdot (\bm{u}^n - \bm{u}^m)\\
\nonumber&=-\int_{\Omega} [\bm{u}^m \cdot \nabla (\bm{u}^n - \bm{u}^m)] \cdot (\bm{v}^n - \bm{v}^m)\\
\nonumber &=-\int_{\Omega} [\bm{u}^m \cdot \nabla (\bm{u}^n - \bm{u}^m)] \cdot (\bm{u}^n - \bm{u}^m) +\int_{\Omega} [\bm{u}^m \cdot \nabla (\bm{u}^n - \bm{u}^m)] \cdot (\Delta\bm{u}^n - \Delta\bm{u}^m)\\
\nonumber &=-\int_{\Omega} [\bm{u}^m \cdot \nabla (\bm{u}^n - \bm{u}^m)] \cdot (\bm{u}^n - \bm{u}^m)- \int_{\Omega} [\partial_i\bm{u}^m_j \cdot \partial_j (\bm{u}^n_k - \bm{u}^m_k)] \cdot \partial_i(\bm{u}^n_k - \bm{u}^m_k),\\
\end{aligned}
\end{equation*}
with the aid of H\"older inequality and Sobolev embedding theorem, we arrive at
\begin{equation*}
\begin{aligned}
M_2 &\leqslant \|\bm{u}^m\|_{L^\infty} \|\bm{u}^n - \bm{u}^m\|_{\bm{H}^1(\Omega)}^2 + \|\nabla \bm{u}^m\|_{L^\infty} \|\bm{u}^n - \bm{u}^m\|_{\bm{H}^1(\Omega)}^2\\
&\leqslant C\|\bm{u}^m\|_{\bm{H}^3(\Omega)} \|\bm{u}^n - \bm{u}^m\|_{\bm{H}^1(\Omega)}^2.
\end{aligned}
\end{equation*}

It remains to check the estimate of the term $M_4$. Similarly, using integration by parts, $M_4$ can be rewritten as follows
\begin{equation*}
\begin{aligned}
\nonumber M_4 &= \int_{\Omega} [(\nabla \bm{u}^m)^t \cdot (\bm{v}^n - \bm{v}^m)] \cdot (\bm{u}^n - \bm{u}^m)\\
\nonumber &= \int_{\Omega} \partial_i \bm{u}^m_j (\bm{v}^n_j - \bm{v}^m_j) (\bm{u}^n_i - \bm{u}^m_i)\\
\nonumber &= \int_{\Omega} \partial_i \bm{u}^m_j (\bm{u}^n_j - \bm{u}^m_j) (\bm{u}^n_i - \bm{u}^m_i)- \int_{\Omega} \partial_i \bm{u}^m_j (\Delta\bm{u}^n_j - \Delta\bm{u}^m_j) (\bm{u}^n_i - \bm{u}^m_i)\\
\nonumber &= \int_{\Omega} \partial_i \bm{u}^m_j (\bm{u}^n_j - \bm{u}^m_j) (\bm{u}^n_i - \bm{u}^m_i) + \int_{\Omega} \partial_k\partial_i \bm{u}^m_j \partial_k(\bm{u}^n_j - \bm{u}^m_j) (\bm{u}^n_i - \bm{u}^m_i)\\
\nonumber &\quad+ \int_{\Omega} \partial_i \bm{u}^m_j \partial_k(\bm{u}^n_j - \bm{u}^m_j) \partial_k(\bm{u}^n_i - \bm{u}^m_i),
\end{aligned}
\end{equation*}
then applying H\"older inequality and Gagliardo-Nirenberg  inequality, we deduce, in particular
\begin{equation*}
\begin{aligned}
M_4 &\leqslant \|\nabla \bm{u}^m\|_{L^\infty} \|\bm{u}^n -\bm{u}^m\|_{L^2}^2 + \|D^2\bm{u}^m\|_{L^4}\|\nabla\bm{u}^n - \nabla\bm{u}^m\|_{L^2}\|\bm{u}^n - \bm{u}^m\|_{L^4} \\
&\quad+ \|\nabla \bm{u}^m\|_{L^\infty} \|\nabla \bm{u}^n -\nabla\bm{u}^m\|_{L^2}^2 \\
&\leqslant C\|\bm{u}^m\|_{\bm{H}^3(\Omega)} \|\bm{u}^n -\bm{u}^m\|_{\bm{H}^1(\Omega)}^2.
\end{aligned}
\end{equation*}
The previous estimates about $M_1, M_2, M_3$ and $M_4$ then imply that
\begin{equation*}
\begin{aligned}
\frac{1}{2} \frac{\dif}{\dif t}\|\bm{u}^n - \bm{u}^m)\|^2_{\bm{H}^1(\Omega)} \leqslant C\|\bm{u}^n -\bm{u}^m\|_{\bm{H}^1(\Omega)}^2,
\end{aligned}
\end{equation*}
 thanks to Gr\"onwall's inequality, we obtain
\begin{equation*}
\begin{aligned}
\sup_{t\in[0, T]} \|\bm{u}^n - \bm{u}^m\|_{\bm{H}^1(\Omega)}(t) \leqslant  e^{CT} \|\bm{u}^n_0 - \bm{u}^m_0\|_{\bm{H}^1(\Omega)}.
\end{aligned}
\end{equation*}
Consequently, we can conclude that $\bm{u}^n$ converges strongly in $C([0, T]; \bm{H}^1(\Omega))$ to $\bm{u}$ with $\bm{u}|_{t=0} = \bm{u}_0$.

Since $\bm{u}^n$ converges weak-star (respectively, strongly) to $\bm{u}$ in $L^\infty([0, T]; \bm{H}^s(\Omega)$)(respectively, $C([0, T]; \bm{H}^1(\Omega))$), and then it is easy to check that $\bm{u}$ is a solenoidal vector,  we thus know that $\bm{u}$ satisfies the equations ($\ref{euler-alpha-1r}$)--($\ref{euler-alpha-5r}$) in the sense of ($\ref{energy-formula-main-theorem}$).
\end{proof}

\section{  Discussions and comments}

As the solution to Euler-$\alpha$ equations  in Main Theorem satisfies the formula \eqref{energy-formula-main-theorem}, we find it is not necessary to require the regularity of initial data in $ \bm{H}^3(\Omega) \cap V$. It is a natural problem that one would like to look for the weak solution satisfying the formula \eqref{energy-formula-main-theorem} provided that the initial data are less regularity, for instance, $\bm{u}_0$ belongs to $\bm{H}^s(\Omega) \cap V$ for some positive number $s<3$. This is subtle to find such solution, because it is crucial not to obtain the estimates for the \textit{unfiltered} vorticity $q$ when initial data are less regular.

Further more, it is an interesting problem to hunt for the solutions to 3D Euler-$\alpha$ equations in exterior domain, which exist in some uniform time interval with respect to $\alpha.$ It is very difficult to do this, since  in 3D case, the equations for the \textit{unfiltered} vorticity $q$ are the transport equations given by
 $$\partial_t q + \bm{u} \cdot \nabla{q} = q\cdot\nabla\bm{u},$$
which is different from the formula \eqref{construction-equation-1}, and that one has no idea to  get the desired uniform estimates for hight order derivatives.  However, we can expect to obtain the limit problems of the 3D Euler-$\alpha$ equations past an obstacle  as both $\alpha$ and the diameter of the obstacle go to zero.%which is similar to the results for the viscous flows past obstacle in \cite{iftimie2003two}.
%At last, let us consider some questions naturally associated with the work we have presented. First, in the main theorem, we assume the initial data $\bm{u}_0$ belongs to $\bm{H}^3(\Omega) \cap V$. In future, we would like to weaken the regularity of $\bm{u}_0$ to get a weaker solution of Euler-$\alpha$ equations in exterior domain. Second, we plan to consider Euler-$\alpha$ equations in three dimension, focusing on well-posedness and vanishing $\alpha$ limit problems. Comparing with 2 dimension case, the global well-posedness problem in 3 dimension is more challenging, since the vorticity equation includes a deformmation term.
%%%%%%%%%%%%%%%%%%%%%%%%%%%%%%%%%%%%%%%%%%%%%%%%%%%%%%%%%%%%%%%%%%%%%%%%%%
%%%%%%%%%%%%%%%%%%%%%%%%%%%%%%%%%%%%%%%%%%%%%%%%%%%%%%%%%%%%%%%%%%%%%%%%%%
%%%%%%%%%%%%%%%%%%%%%%%%%%%%%%%%%%%%%%%%%%%%%%%%%%%%%%%%%%%%%%%%%%%%%%%%%%
\section*{Acknowledgement}
The authors would like to express their gratitude to Professor Guilong Gui for his discussion. The work of Aibin Zang was supported in part  by the National Natural Science Foundation of China (Grant no. 11771382, 12061080).
\normalem
\bibliographystyle{amsplain}
\bibliography{mybib}

%%%%%%%%%%%%%%%%%%%%%%%%%%%%%%%%%%%%%%%%%%%%%%%%%%%%%%%%%%%%%%%%%%%%%%%%%%%%%%%%%
%%%%%%%%%%%%%%%%%%%%%%%%%%%%%%%%%%%%%%%%%%%%%%%%%%%%%%%%%%%%%%%%%%%%%%%%%%%%%%%%%
%%%%%%%%%%%%%%%%%%%%%%%%%%%%%%%%%%%%%%%%%%%%%%%%%%%%%%%%%%%%%%%%%%%%%%%%%%%%%%%%%
\end{document}